\documentclass[11pt,reqno]{amsart}
\usepackage[tt=false]{libertine}
\usepackage{amssymb}
\usepackage{mathtools}
\usepackage[varbb]{newpxmath}
\usepackage[foot]{amsaddr}

\let\savedbigtimes\bigtimes
\let\bigtimes\relax
\usepackage{mathabx}
\let\bigtimes\savedbigtimes

\usepackage[margin=1in, bottom=1in]{geometry}
\usepackage{graphicx}
\usepackage{caption}
\usepackage{subcaption}
\usepackage{enumitem}
\usepackage{comment}
\usepackage{bbm}

\usepackage[usenames,dvipsnames]{xcolor}
\usepackage[colorlinks=true,
  linkcolor=teal!60!black,
  citecolor=PineGreen,
  urlcolor=RedViolet]{hyperref}
\hypersetup{bookmarksopen=true}

\usepackage[yyyymmdd,hhmmss]{datetime}

\usepackage{tikz}
\usetikzlibrary{decorations.pathreplacing,calligraphy}
\usetikzlibrary{calc}

\newtheorem{thm}{Theorem}[section]
\newtheorem{lem}[thm]{Lemma}

\theoremstyle{definition}

\theoremstyle{remark}
\newtheorem{rmk}[thm]{Remark}

\numberwithin{equation}{section}
\def\beq#1\eeq{%
    \begin{equation}%
    #1%
    \end{equation}%
}
\def\baln#1\ealn{%
    \begin{align}%
    #1%
    \end{align}%
}
\def\balnn#1\ealnn{%
    \begin{align*}%
    #1%
    \end{align*}%
}

\def\bC{{\boldsymbol{C}}}

\def\bG{{\boldsymbol{G}}}
\def\bH{{\boldsymbol{H}}}

\def\bR{{\boldsymbol{R}}}

\def\bW{{\boldsymbol{W}}}

\def\bY{{\boldsymbol{Y}}}
\def\bZ{{\boldsymbol{Z}}}

\def\btheta{{\boldsymbol{\theta}}}

\def\bv{{\boldsymbol{v}}}

\def\bbE{{\mathbb{E}}}

\def\bbN{{\mathbb{N}}}

\def\bbP{{\mathbb{P}}}

\def\bbR{{\mathbb{R}}}

\def\cB{{\mathcal{B}}}
\def\cC{{\mathcal{C}}}

\def\cE{{\mathcal{E}}}

\def\cI{{\mathcal{I}}}
\def\cJ{{\mathcal{J}}}
\def\cK{{\mathcal{K}}}

\def\cN{{\mathcal{N}}}

\def\cS{{\mathcal{S}}}
\def\cT{{\mathcal{T}}}

\def\de{{\mathsf{d}}}

\def\ber{{\mathsf{Ber}}}

\def\amp{{\mathsf{AMP}}}
\def\bayes{{\mathsf{Bayes}}}
\def\LD{{\mathsf{LD}}}
\def\MMSE{{\mathsf{MMSE}}}
\def\cum{{\mathsf{Cum}}}
\def\core{{\mathsf{Core}}}
\def\aut{{\mathsf{Aut}}}

\title{Almost Sharp Equivalence between Approximate Message Passing and Low-Degree Polynomials}

\author{Zhangsong Li}

\address[Z.~Li]{School of Mathematical Sciences, Peking University}
\email[Z.~Li]{ramblerlzs@pku.edu.cn}

\date{\today}

\begin{document}

\begin{abstract}
    We prove a sharp lower bound for growing-degree polynomial estimation in the Gaussian planted submatrix model. The observation is 
    $$
    \boldsymbol{Y}= \frac{\lambda}{\sqrt{n}} \boldsymbol{\theta} \boldsymbol{\theta}^{\top}+\boldsymbol{W},
    $$
    where the coordinates of $\boldsymbol{\theta}$ are independent $\ber(\rho)$ variables and $\boldsymbol{W}$ is symmetric with independent standard Gaussian upper-triangular entries. For every fixed $\lambda>0$ and $\rho\in(0,1)$, we give an explicit finite-dimensional bound implying that every sequence of polynomial estimators of degree $D(n)=o(n^{1/60})$ has normalized mean-square error with limit inferior at least $\rho-q_{\mathsf{amp}}/\lambda$, the limiting error of Bayes approximate message passing (AMP). This extends the constant-degree result of Montanari and Wein~\cite{montanari2025equivalence} for the Bernoulli prior. Combined with their polynomial approximation of fixed-iteration AMP, the bound identifies the exact limiting low-degree MMSE whenever $D(n)\to\infty$ within this range. It therefore resolves the Bernoulli rank-one case of the growing-degree AMP-equivalence question discussed in~\cite{wein2025computational, maleki2026high}.

    The proof constructs a low-degree certificate using \emph{conditional} joint cumulants of the signal coordinates and their products. Specifically, we condition on an auxiliary Gaussian channel $\boldsymbol{R}$ calibrated to the AMP fixed point. This retains signal dependence that is lost in unconditional cumulant bounds and produces the cancellations needed for quantitative control as the degree grows. Most of the arguments in this paper were generated using GPT-6 Astra.
\end{abstract}

\maketitle

\setcounter{tocdepth}{1}
\tableofcontents

\section{Introduction and main results}
\label{s:intro}

Statistical-computational gaps are a ubiquitous phenomenon in high-dimensional statistics. Consider estimation of a high-dimensional parameter vector $\btheta= (\theta_1,\theta_2,\dots,\theta_n)$ from noisy observations $\bY\sim {\rm P}_{\btheta}$. In many models, we can characterize the optimal accuracy achieved by arbitrary estimators. However, when we analyze classes of estimators that can be implemented via polynomial-time algorithms, a significantly smaller accuracy is obtained. A short list of examples includes sparse regression~\cite{celentano2022fundamental, gamarnik2022sparse}, sparse principal component analysis~\cite{johnstone2009consistency, amini2008high, krauthgamer2015semidefinite, berthet2013complexity}, graph clustering and community detection~\cite{decelle2011asymptotic, barak2019nearly}, and tensor principal component analysis~\cite{montanari2014statistical, hopkins2015tensor}.

This state of affairs has led to the conjecture that these gaps are fundamental in nature: there simply exists no polynomial-time algorithm achieving the statistically optimal accuracy. Proving such a conjecture is extremely challenging since  standard complexity theoretic assumptions (e.g.\ P$\neq$NP) are ill-suited to establish complexity lower bounds in \emph{average-case} settings where the input to the algorithm is random. Thus, currently the leading approaches for \emph{demonstrating} hardness are based on either average-case reductions which formally relate different average-case problems to each other (see, e.g., \cite{brennan2018reducibility, brennan2020reducibility} and references therein) or based on unconditional lower bounds against restricted classes of algorithms (see e.g. \cite{diakonikolas2017statistical, gamarnik2021overlap}). However, the proliferation of lower bounds suggests a need for unifying principles, especially because the story is repeated for numerous statistical estimation problems: lower bounds against a variety of restricted computational models are proven independently, all usually pointing to the same signal-to-noise ratios tolerated by efficient algorithms. Thus, a problem of growing interest is that are some or all of these restricted computational models equivalent in power. This paper focuses on establishing that approximate message passing (AMP) algorithms and low-degree (Low-Deg) polynomial estimators achieve asymptotically the same accuracy in specific problems.  

The motivation for studying the relation between AMP and Low-Deg comes from the distinctive position occupied by these two classes in the theoretical  landscape. AMP are iterative algorithms motivated by ideas in information theory (iterative decoding~\cite{gallager1962low}) and statistical physics (cavity method and TAP equations~\cite{thouless1977solution, mezard1987spin}). Their structure is relatively constrained: they operate by alternating a matrix-vector multiplication (with a matrix constructed from the data $\bY$), and a non-linear operation on the same vectors (typically independent of the data matrix). This structure has opened the way to sharp characterization of their behavior in the high-dimensional limit, known as ``state evolution'' \cite{donoho2009message, bayati2011dynamics, bolthausen2014iterative}.
On the other hand, Low-Deg was originally motivated by a connection to Sum-of-Squares (SoS) semidefinite programming relaxations and captures a different (algebraic) notion of complexity \cite{hopkins2017efficient, hopkins2017power, schramm2022computational, ding2025low}. In Low-Deg procedures, each unknown parameter $\btheta_i$ is estimated via a fixed polynomial in the entries of the data matrix $\bY$, of constant or moderately growing degree. As such, these estimators are relatively unconstrained, and indeed capture  (via polynomial approximation) a broad variety of natural approaches. Developing a sharp analysis of such a broad class of estimators can be very challenging.

In the planted submatrix problem, we observe a noisy version Y of the rank-one matrix $\btheta\btheta^{\top}$, with $\btheta\in\bbR^n$ an unknown vector:
\begin{equation}{\label{eq:model-def}}
    \bY = \frac{\lambda}{\sqrt{n}} \btheta\btheta^{\top} + \bW.
\end{equation}
Here $\bW$ is independent of $\btheta$, $\bW=\bW^{\top}$, and $(\bW_{ij})_{1\le i\le j\le n}$ are independent $\cN(0,1)$ variables, including the diagonal. We use this unit-diagonal-variance convention throughout the proof. The lower bound also holds for the conventional GOE normalization with diagonal variance two, as in Remark~\ref{rmk:diagonal-var-two}. Given a single observation of $\bY$, we would like to estimate $\btheta$.

Because of its simplicity, the rank-one estimation problem has been a useful sandbox to develop new mathematical ideas in high-dimensional statistics. A significant amount of work has been devoted to the analysis of spectral estimators, which typically take the form $\Hat{\btheta}(\bY) =c_n\, \bv_1(\bY)$ where $\bv_1(\bY)$ is the top eigenvector of $\bY$ and $c_n$ is a scaling factor~\cite{hoyle2004principal, baik2005phase, baik2006eigenvalues, benaych2011eigenvalues}. However, spectral methods are known to be suboptimal if additional information is available about $\btheta$ \cite{lesieur2015phase, banks2018information, perry2018optimality, el2020fundamental}. In this paper, we model this information by assuming $(\btheta_i)_{1\le i\le n} \overset{iid}{\sim} \pi_{\Theta}$, for $\pi_{\Theta}$ a probability distribution on $\bbR$ (which does not depend on $n$). For simplicity, throughout the rest part of the paper we will further focus on the special case where $\pi_\Theta=\ber(\rho)$ for some constant $0<\rho<1$. The results below concern the Bayes risk under this i.i.d. prior.

The high-dimensional Bayes performance of model \eqref{eq:model-def} has been studied through scalar variational formulas; see, e.g., \cite{lelarge2017fundamental, barbier2019adaptive, chen2022hamilton}. We use the scalar potential below only to specify the AMP fixed point, and do not need a Bayes-MMSE limit at parameters with nonunique minimizers. The asymptotic accuracy of the optimal AMP algorithm (called Bayes AMP) is also known~\cite{montanari2021estimation}. It is useful to pause and recall these results in some detail. For an independent scalar pair $\theta\sim\ber(\rho)$ and $Z\sim \cN(0,1)$, put 
\begin{equation}{\label{eq:mutual-info}}
    I_\rho(s)=I(\theta;\sqrt s\theta+Z), \qquad \Psi(q)=\frac{q^2}{4}-\frac{\lambda\rho q}{2}+I_\rho(\lambda q).
\end{equation}
In addition, define
\begin{align}{\label{eq:def-q-bayes-amp}}
    q_{\bayes} := \min\operatorname*{arg\,min}_{q\ge 0} \Psi(q); \qquad
    q_{\amp} := \inf\{q\geq0:\Psi'(q)=0,\ \Psi''(q)\geq0\}.
\end{align}
The existence and positivity of $q_\amp$, and its identification with the smallest nonnegative scalar state-evolution fixed point, are verified in Lemma~\ref{lem:fixed-identity}. The next statement follows from the fixed-iteration state evolution of Bayes AMP and this scalar fixed-point characterization; see \cite{montanari2021estimation, montanari2024statistically, montanari2025equivalence}.
\begin{thm}{\label{thm:bayes-amp}}
    Assume $\pi_{\Theta}=\ber(\rho)$, and let $\btheta^{\amp,t}(\bY)$ be the Bayes AMP estimator after $t$ iterations, with the standard uninformative initialization corresponding to $q_0=0$ in scalar state evolution. Then
    \begin{align}{\label{eq:bayes-limit}}
        \lim_{t\to\infty}\lim_{n\to\infty}\frac{1}{n}\bbE\left\{ \|\btheta^{\amp,t}(\bY)-\btheta\|^2_2 \right\} = \rho-q_{\amp}/\lambda.
    \end{align}
    Computing $t$ iterations costs $O(tn^2)$. Consequently, for every $\varepsilon>0$, a fixed number $t=t(\lambda,\rho,\varepsilon)$ of iterations gives limiting MSE at most $\rho-q_\amp/\lambda+\varepsilon$ in time $O_{\lambda,\rho,\varepsilon}(n^2)$.
\end{thm}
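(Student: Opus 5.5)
The argument has three parts: invoke fixed-iteration state evolution, analyze the resulting scalar recursion, and pass to the limit in $t$.

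\emph{State evolution.} Bayes AMP uses the denoiser $f_t(x)=\bbE[\theta\mid \mu_t\theta+\sqrt{q_t}\,Z=x]$, where the scalar parameters follow the recursion
\[
\mu_{t+1}=\lambda\,\bbE[\theta f_t],\qquad q_{t+1}=\bbE[f_t^2].
\]
Because the denoiser is Bayes optimal, the Nishimori identity gives $\bbE[\theta f_t]=\bbE[f_t^2]$. After rescaling, the recursion therefore reduces to a single scalar map
\[
q_{t+1}=G(q_t):=\bbE\bigl[\bbE[\theta\mid \sqrt{\lambda q_t}\,\theta+Z]^2\bigr],
\]
up to the paper's normalization of $q$; I would take $q$ to be measured in units that make $\lambda q$ the effective SNR. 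Fixed-$t$ state evolution (Bayati--Montanari, or Bolthausen, with the Onsager correction) gives, for the estimator $\btheta^{\amp,t}=f_t(\bx^t)$,
\[
\lim_{n\to\infty}\frac1n\,\bbE\|\btheta^{\amp,t}-\btheta\|^2=\bbE\theta^2-2\,\bbE[\theta f_t]+\bbE[f_t^2]=\rho-\bbE[f_t^2].
\]
With the paper's normalization this is $\rho-q_{t+1}/\lambda$. Convergence of the expectation, not merely convergence in probability, follows because the Bernoulli denoisers are bounded and Lipschitz, so the state-evolution test functions have quadratic growth. The uninformative start $q_0=0$ is problematic when $\rho$ is nonzero-mean: with $\theta\sim\ber(\rho)$ the first iterate is driven by the mean $\rho$, so the effective start is not degenerate. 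I would cite \cite{montanari2021estimation} for the precise form of this fixed-$t$ statement, including the spectral-free mean-driven initialization.

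\emph{Scalar dynamics.} The map $G$ is continuous and nondecreasing in $q$, since the MMSE decreases in the SNR. Starting from $q_0=0$, the sequence $q_t$ is monotone, because $q_1=G(0)\ge 0=q_0$, and it is bounded by $\rho$. It therefore converges to the smallest fixed point of $G$ on $[0,\infty)$. By the I-MMSE relation $I_\rho'(s)=\tfrac12\,\mathrm{mmse}(s)$, we have
\[
\Psi'(q)=\tfrac q2-\tfrac{\lambda\rho}{2}+\tfrac\lambda2\,\mathrm{mmse}(\lambda q)=\tfrac12\bigl(q-\lambda\,G(q)\bigr)
\]
in matched units. Hence the fixed points of $G$ are exactly the critical points of $\Psi$. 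Lemma~\ref{lem:fixed-identity} identifies the smallest fixed point with $q_\amp$, which I assume. One checks that $\Psi''\ge0$ holds at the smallest crossing: below it $q<\lambda G(q)$, so $\Psi'<0$, and $\Psi'$ changes sign from negative to nonnegative there. Letting $t\to\infty$ in $\rho-q_{t+1}/\lambda$ then yields \eqref{eq:bayes-limit}.

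\emph{Complexity.} Each iteration consists of one matrix-vector product, costing $O(n^2)$, plus $O(n)$ scalar denoising and Onsager-term work; the scalar parameters are precomputed independently of $n$. Given $\varepsilon>0$, choose $t$ with $q_{t+1}\ge q_\amp-\lambda\varepsilon$. The stated bound then follows.

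\emph{Main obstacle.} The hard step is bookkeeping rather than substance: matching the normalization of $q$ in $\Psi$ to the state-evolution variable, and justifying that the monotone limit from the actual initialization is the smallest fixed point and not a larger one. For the second point I would first check that $G(0)>0$ and that the $q_t$ stay below every fixed point by monotonicity, via induction using $G(q^\ast)=q^\ast$.
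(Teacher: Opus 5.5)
Your proposal is correct and follows essentially the paper's route, which likewise cites fixed-iteration state evolution (limiting MSE $\rho-q_{t+1}/\lambda$ at step $t$) and then, as in the last paragraph of the proof of Lemma~\ref{lem:fixed-identity}, uses that the update $q\mapsto\lambda\bbE[\eta(\lambda q\btheta_1-\lambda q/2+\sqrt{\lambda q}\bZ_1)^2]$ is continuous and nondecreasing, with $q_0=0$ below every fixed point, so that $q_t\uparrow q_\amp$, the smallest zero of $\Psi'$. One minor fix: passing from almost-sure convergence of the empirical MSE to convergence of $\frac1n\bbE\|\btheta^{\amp,t}-\btheta\|_2^2$ follows from bounded convergence (the posterior-mean denoiser takes values in $[0,1]$, so $\frac1n\|\btheta^{\amp,t}-\btheta\|_2^2\le 1$), not from the quadratic growth of the test function.
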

We now turn to the class of low-degree estimators. Define the low-degree minimum mean-square error (MMSE) by
\begin{equation}{\label{eq:low-deg-mmse}}
    \MMSE_{\le D}:=\inf_{\widehat\btheta\in\LD(n,D)} \frac1n\bbE \left[ \|\widehat\btheta(\bY)-\btheta\|^2 \right].
\end{equation}
Here $\LD(n,D)$ is the class of maps whose $n$ coordinates are real polynomials of total degree at most $D$ in the upper-triangular observations. Their coefficients may depend on $n,D,\lambda,\rho$ but not on the realized observations or latent variables. 
Montanari and Wein \cite{montanari2025equivalence} shown that fixed-iteration AMP can be approximated by fixed-degree polynomial estimators. In particular, for every $\varepsilon>0$ there is a degree $D_\varepsilon$, independent of $n$, such that
\[
    \limsup_{n\to\infty}\MMSE_{\le D_\varepsilon}
        \le \rho-q_\amp/\lambda+\varepsilon.
\]
The same work proves $\liminf_{n\to\infty}\MMSE_{\le D}\ge\rho-q_\amp/\lambda$ for every fixed $D$. These statements explain the equivalence between the two methods when the dimension tends to infinity before the degree is allowed to grow. They do not, by themselves, control a sequence $D=D(n)$: a remainder that vanishes for each fixed $D$ may become substantial when $D$ increases with $n$. Extending this exact AMP-risk lower bound to growing degree is the question discussed in \cite{wein2025computational, maleki2026high} that we address here for the Bernoulli prior. 
Our main result can be summarized as follows.
\begin{thm}\label{thm:main}
The following holds with the universal constant $C_0=60$ (or any larger constant). For every $\lambda>0$, $\rho\in(0,1)$, $n\in\mathbb N$, and every integer $D\geq1$, 
\begin{equation}\label{eq:main-finite}
    \MMSE_{\le D}\geq \rho-\frac{q_\amp}{\lambda} -\sum_{g=1}^{D} \left( \frac{(C_0(1+\lambda)(D+1))^{C_0}}{n} \right)^g.
\end{equation}
In particular, if $(C_0(1+\lambda)(D+1))^{C_0} \leq n/2$, the error term is at most $2(C_0(1+\lambda)(D+1))^{C_0}/n$. Thus, for every fixed $(\lambda,\rho)$ and every $D(n)=o(n^{1/60})$, in particular every $D(n)=n^{o(1)}$,
\[
    \liminf_{n\to\infty}\MMSE_{\le D(n)}\geq\rho-\frac{q_\amp}{\lambda}.
\]
\end{thm}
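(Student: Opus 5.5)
By symmetry and linearity it suffices to lower bound the error for estimating a single coordinate, say $\theta_1$. Fix $\widehat\theta_1=f(\bY)$ with $f$ a polynomial of degree at most $D$. We then have
\[
\MMSE_{\le D}\ \ge\ \rho-\sup_{f}\frac{\bbE[f\theta_1]^2}{\bbE[f^2]}=\rho-\mathrm{Corr}^2_{\le D}.
\]
This reduces the problem to showing $\mathrm{Corr}^2_{\le D}\le q_\amp/\lambda+\sum_{g\ge1}(\cdots)^g$.

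\textbf{Step 1: expand in a graph basis.} Write $f=\sum_\alpha c_\alpha \prod_{i\le j}Y_{ij}^{\alpha_{ij}}$, or more conveniently use Hermite polynomials $H_\alpha(\bY)$ indexed by multigraphs $\alpha$ on $[n]$ with at most $D$ edges. Then
\[
\bbE[H_\alpha\theta_1]=(\lambda/\sqrt n)^{|\alpha|}\,\bbE[\theta_1\theta^\alpha]/\sqrt{\alpha!},
\qquad \theta^\alpha=\prod_i\theta_i^{\deg_\alpha(i)}.
\]
The Gram matrix $\bbE[H_\alpha H_\beta]$ is not diagonal because of the planted signal, so I would follow the Schramm--Wein / Montanari--Wein route of bounding the correlation by $\sum_\alpha \kappa_\alpha^2$. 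Here $\kappa_\alpha$ is defined recursively through joint cumulants of $\theta_1$ and the $\theta^{\beta}$'s over sub-multigraphs. For growing $D$, however, these unconditional cumulant bounds lose exactly the combinatorial factors we cannot afford.

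\textbf{Step 2: condition on a Gaussian channel.} Following the abstract, introduce an auxiliary observation $\bR=\sqrt{q_\amp\lambda}\,\btheta+\bZ$ with $\bZ\sim\cN(0,I_n)$ independent, calibrated to the AMP fixed point. Revealing $\bR$ only helps, so I would lower bound the MSE for estimators that are degree-$D$ polynomials in $\bY$ with coefficients arbitrary measurable functions of $\bR$. Conditionally on $\bR$ the $\theta_i$ are independent Bernoulli with posterior means $m_i(R_i)$. Bayes-optimal scalar denoising of $\bR$ alone gives error $\rho-\bbE[m^2]$. The fixed-point equation $\Psi'(q_\amp)=0$ (Lemma~\ref{lem:fixed-identity}) says precisely that $\lambda\,\bbE[m(R)^2]=q_\amp$ and that tree-shaped (path-like) graph contributions reproduce the same state: each additional edge of $\bY$ adds nothing beyond what $\bR$ already encodes at the fixed point. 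The key certificate is therefore the following. Decompose $f$ in the basis of Hermite polynomials of $\bY$ centered at the conditional mean $\frac{\lambda}{\sqrt n}m m^\top$. Then define conditional cumulants $\kappa_\alpha(\bR)$ of $\theta_1$ and the products $\theta^\beta$. Conditional independence of the coordinates makes every connected joint cumulant involving distinct vertices vanish unless the vertices are linked through edges of $\alpha$. This yields the cancellation: only connected multigraphs containing vertex $1$ contribute. Among these, trees are exactly cancelled by the fixed-point identity (the $\Psi''\ge0$ condition ensures the relevant series/linear operator has norm $\le1$ at $q_\amp$, giving stability). Only graphs with excess $g\ge1$ (cycles, or repeated edges) survive.

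\textbf{Step 3: count the surviving graphs.} A connected multigraph with $e\le D$ edges, $v$ vertices and excess $g=e-v+1\ge1$ contributes at most $n^{v-1}\cdot n^{-e}\cdot(\text{cumulant bound})^2 = n^{-g}\cdot(\cdots)$. The conditional cumulants are bounded by $(CD)^{O(e)}$-type quantities, and together with the $\lambda^{2e}$ factors and the number of shapes these are absorbed into $(C_0(1+\lambda)(D+1))^{C_0 g}$. The point is that powers beyond $g$ must be charged to cycles, not to the tree part, and this is where the tree cancellation is essential. Summing over $g=1,\dots,D$ gives \eqref{eq:main-finite}. The asymptotic corollaries then follow from the geometric series once $(C_0(1+\lambda)(D+1))^{C_0}\le n/2$ and $D=o(n^{1/60})$.

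\textbf{Main obstacle.} The hardest step is the quantitative bound on conditional cumulants of products $\theta^\beta$ for graphs with excess $g$. This bound must be polynomial in $D$ per unit of excess rather than per edge, i.e.\ the tree parts must cancel exactly, not merely up to $(CD)^{e}$. I would first try to prove a recursive inequality $|\kappa_\alpha|\le \prod(\text{tree factors})\cdot (CD)^{C g}$ by induction on edges, peeling leaves. Each leaf removal should be an exact identity using the scalar fixed-point equation, and only cycle-closing edges lose a factor. The Hermite/Gram-matrix inversion (a Cauchy--Schwarz over a triangular change of basis) is where I expect the constant $60$ to originate.
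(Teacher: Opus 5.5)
There is a genuine gap in Step~2. You propose to lower bound the error over the \emph{enlarged} class of degree-$D$ polynomials in $\bY$ whose coefficients are arbitrary functions of $\bR=\sqrt{\lambda q_\amp}\,\btheta+\bZ$. The target bound is false for that class, already at degree one.

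Let $T=n^{-1/2}\sum_{j\neq1}\eta(\bH_j)\bY_{1j}$, which is linear in $\bY$ with $\bR$-measurable coefficients. Use $\bbE[\btheta_1\eta(\bH_1)]=\bbE[\eta(\bH_1)^2]=q_\amp/\lambda$ and the law of large numbers. Then $T\approx q_\amp\btheta_1+\sqrt{q_\amp/\lambda}\,G$, with $G\sim\cN(0,1)$ independent of $(\btheta_1,\bR_1)$. This is a fresh scalar channel of strength $s=\lambda q_\amp$. The best estimator of the form $a(\bR_1)+b(\bR_1)T$ then has limiting error
\[
\rho-\frac{q_\amp}{\lambda}-\bbE\left[\frac{sv^2}{1+sv}\right],\qquad v=\eta(\bH_1)\big(1-\eta(\bH_1)\big)>0,
\]
which is strictly below $\rho-q_\amp/\lambda$. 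So your heuristic that edges of $\bY$ add nothing beyond what $\bR$ already encodes is false once $\bR$ is visible. No argument framed this way can reach the AMP value.

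The paper instead keeps $\bR$ out of the estimator entirely. It enters only the dual certificate of Lemma~\ref{lem:bound-mmse}, through the projected coefficients $A_\alpha(\btheta)=\bbE[\cK_\alpha(\bR)\mid\btheta]$, so $u=u(\btheta,\bW)$ is tested only against polynomials in $\bY$. This projection is what lets Gaussian integration by parts in $\bZ$ and Bessel's inequality give $\sum_k s^ka_k/k!\le\bbE[\eta(\bH_1)^2]=q_\amp/\lambda$, where $a_k=\bbE[(\bbE[\eta^{(k)}(\bH_1)\mid\btheta_1])^2]$. Two further ingredients close an induction bounding the zero-order term plus all rooted trees by $q_\amp/\lambda$:
\begin{itemize}
\item the leaf-removal identity $\bC_\alpha=\eta(h_v)\,\partial_{h_w}\bC_{\alpha-v}$, which is a general Bernoulli fact, not a use of the fixed-point equation;
\item the self-consistency $s=\lambda^2\cdot(q_\amp/\lambda)$.
\end{itemize}
The trees are not ``exactly cancelled'': $a_0<q_\amp/\lambda$, and the positive tree terms fill the rest of that budget.

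Your Step~3 is also too loose. A per-graph bound of the form $(CD)^{O(e)}$ is useless for unicyclic graphs with $e\approx D$. The paper instead:
\begin{itemize}
\item strips hanging trees down to the rooted core and absorbs them by the same Bessel argument in the core coordinates;
\item uses the stability bound $\lambda^2\bbE[\eta'(\bH_1)^2]\le1$, which comes from $\Psi''(q_\amp)\ge0$, to pay for the degree-two core vertices (this, not the tree part, is where stability is needed), so only the $O(g)$ branch vertices cost powers of $D$;
\item counts cores of excess $g$ by suppressing degree-two vertices.
\end{itemize}
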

Together with the fixed-degree approximation upper bound above and monotonicity in $D$, Theorem~\ref{thm:main} implies
\[
    \lim_{n\to\infty}\MMSE_{\le D(n)}
       =\rho-q_\amp/\lambda
    \qquad\text{whenever }D(n)\to\infty\text{ and }D(n)=o(n^{1/60}).
\]
Our proof thus significantly strengthens \cite{montanari2025equivalence} in this setting. 
\begin{rmk}[Diagonal variance two]{\label{rmk:diagonal-var-two}}
    Theorem~\ref{thm:main} also applies when the diagonal noise has variance two. Indeed, let $\bY^{(1)}$ denote the unit-diagonal-variance observation and let $\bG$ be an independent diagonal matrix with i.i.d. $\cN(0,1)$ diagonal entries. Then $\bY^{(2)}=\bY^{(1)}+\bG$ has the diagonal-variance-two law. For any degree-$D$ polynomial estimator $\widehat{\btheta}$ in $\bY^{(2)}$, the map $\overline{\btheta}(y)=\bbE_{\bG}[\widehat{\btheta}(y+\bG)]$ is a polynomial estimator of degree at most $D$, and conditional Jensen's inequality gives
    \[
        \bbE\big[\|\overline{\btheta}(\bY^{(1)})-\btheta\|_2^2\big]
        \leq\bbE\big[\|\widehat{\btheta}(\bY^{(2)})-\btheta\|_2^2\big].
    \]
    Taking the infimum shows that the low-degree MMSE with diagonal variance two is no smaller than that with diagonal variance one. Thus the same lower bound holds, without changing $C_0$.
\end{rmk}
The proof works through a dual certificate rather than by directly analyzing arbitrary polynomial estimators. Starting from the duality formulation in~\cite{sohn2025sharp}, we introduce an auxiliary Gaussian observation $\bR$ whose strength is chosen at the AMP fixed point. Conditioning on $\bR$ preserves the product structure of the signal distribution, allowing us to construct the certificate from conditional joint cumulants of the signal coordinates and their products. Retaining this signal dependence creates cancellations that are lost in unconditional cumulant bounds and allows us to control the certificate quantitatively in the degree. Finally, we would like to remark that the current result cannot be deduced from the statistical lower bound for the estimation problem, and thus our results highlight the emergence of a statistical-computational gap; see, e.g., \cite{sohn2025sharp}.

\subsection*{Acknowledgments}

This work is partially supported by the NSFC Major Program (Project No. 12595294), the NSFC Key Program (Project No. 12231002), and the New Cornerstone Investigator Program (Project No. NCI202501).

\subsection*{Statement on AI use}

Most of the formal arguments in this paper were initially generated by GPT-6 Astra. The primary human input is the high-level strategic decision to introduce an auxiliary Gaussian channel $\bR$. GPT-6 Astra identified the conditional cumulant bound stated in Lemma~\ref{lem:bound-mmse} and concluded the proof following successive rounds of interactive refinement. The author takes full responsibility for the correctness of the paper.

\section{Proof overview}

We now sketch the main idea in our proof. Clearly, it suffices to lower bound the $L^2$-distance between $f(\bY)$ and $\btheta_1$ for all low-degree polynomial $f(\bY)$. The starting point of our proof is a duality argument proposed in \cite{sohn2025sharp}, which reduce the problem to constructing a suitable ``low-degree certificate'' with small $L^2$-norm.
\begin{lem}{\label{lem:duality}}
    We say a square-integrable random variable $u=u(\btheta,\bW)$ is a \emph{degree-$D$ certificate} for $\btheta_1$, if 
    \begin{equation}{\label{eq:certificate}}
        \mathbb E\left[ uf(\bY) \right] = \mathbb E[ \btheta_1f(\bY) ] \quad\text{for every scalar polynomial $f$ of degree at most $D$.}
    \end{equation}
    We then have $\MMSE_{\le D} \ge \rho-\bbE[u^2]$ for any degree-$D$ certificate $u$.
\end{lem}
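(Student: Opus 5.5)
By exchangeability of the coordinates under the i.i.d.\ prior, together with permutation symmetry of the noise, I expect to reduce to the first coordinate. For an arbitrary estimator $\widehat\btheta\in\LD(n,D)$, let $\sigma$ be a uniformly random permutation of $[n]$ acting on indices, and symmetrize $\widehat\btheta$ over $\sigma$. Because the joint law of $(\btheta,\bY)$ is invariant under simultaneous relabeling, the symmetrized estimator is still in $\LD(n,D)$. By convexity of the squared loss (Jensen), its risk is no larger. By symmetry, each coordinate then has the same error $\bbE[(f(\bY)-\btheta_1)^2]$, where $f$ is a scalar polynomial of degree at most $D$. Alternatively, one can skip symmetrization: for each $i$ the certificate for $\btheta_i$ is the relabeled version of $u$, with the same second moment. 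Either way, it suffices to show $\bbE[(f(\bY)-\btheta_1)^2]\ge \rho-\bbE[u^2]$ for every scalar polynomial $f$ of degree at most $D$.

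Fix such an $f$. Since $\btheta_1\in\{0,1\}$, we have $\bbE[\btheta_1^2]=\rho$. Expanding,
\[
\bbE[(f-\btheta_1)^2]=\rho-2\bbE[\btheta_1 f]+\bbE[f^2].
\]
Applying the certificate identity \eqref{eq:certificate} to $f$ gives $\bbE[\btheta_1 f]=\bbE[uf]$. Hence
\[
\bbE[(f-\btheta_1)^2]=\rho-2\bbE[uf]+\bbE[f^2]=\rho-\bbE[u^2]+\bbE[(f-u)^2]\ge\rho-\bbE[u^2].
\]
Every term is finite: $f(\bY)$ is a polynomial in Gaussian and bounded variables, so it has all moments, and $u$ is square-integrable by assumption. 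Summing over coordinates, or invoking symmetry, yields $\frac1n\bbE\|\widehat\btheta-\btheta\|^2\ge\rho-\bbE[u^2]$. Taking the infimum over $\LD(n,D)$ gives the claim.

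There is essentially no obstacle here. The one point requiring care is that the certificate for coordinate $i$ must be obtained by relabeling. Concretely, with $\tau$ the transposition swapping $1$ and $i$, set $u_i=u\circ\tau$, meaning $u$ evaluated at the permuted $(\btheta,\bW)$. For every polynomial $g$ of degree at most $D$, $g(\bY)\circ\tau$ is again a polynomial of degree at most $D$ in $\bY$, and the law of $(\btheta,\bW)$ is invariant under $\tau$. Therefore $\bbE[u_i g]=\bbE[\btheta_i g]$ and $\bbE[u_i^2]=\bbE[u^2]$, which is the reduction needed.
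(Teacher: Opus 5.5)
Your proof is correct and is essentially the paper's argument: you use the certificate identity to replace $\bbE[\btheta_1 f(\bY)]$ by $\bbE[u f(\bY)]$, complete the square to get $\bbE[(f(\bY)-\btheta_1)^2]=\rho-\bbE[u^2]+\bbE[(f(\bY)-u)^2]$, and pass to every coordinate by symmetry. Your explicit relabeling $u_i=u\circ\tau$ relies on two facts: $g\circ\tau$ is again a polynomial of degree at most $D$, and the law of $(\btheta,\bW)$ is $\tau$-invariant. It simply spells out the paper's one-line ``by symmetry'' step.
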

\begin{proof}
    The proof is essentially the same as the linear algebra duality argument in \cite{sohn2025sharp}, but we record the argument here for completeness. For any degree-$D$ certificate $u$ and any scalar polynomial $f$ of degree at most $D$,
    \[
        \bbE\left[ (f(\bY)-\btheta_1)^2 \right] =\bbE\left[ (f(\bY)-u)^2 \right] +\rho-\bbE[u^2] \geq\rho-\bbE[u^2].
    \]
    By symmetry, we have $\bbE[(f_i(\bY)-\btheta_i)^2] \ge \rho-\bbE[u^2]$ for all $1 \le i \le n$. Taking infimum over $\widehat{\btheta}=(f_1(\bY),\ldots,f_n(\bY))$ yields the desired result.
\end{proof}
Based on Lemma~\ref{lem:duality}, it suffices to construct a certificate $u$ with small $L^2$ norm. Let $\mathcal E_n=\{(i,j):1\leq i\leq j\leq n\}$, and write $x_{ij}(\btheta)=\btheta_i \btheta_j$ for each $(i,j) \in \mathcal E_n$. Perhaps the simplest attempt for such a construction, as proposed in \cite{schramm2022computational, sohn2025sharp}, is to construct $u$ using the joint cumulants of $\{ x_{ij}:(i,j) \in \cE_n \}$. For bounded real random variables $B_1,\ldots,B_m$ with $m\geq1$, their moment generating function is given by $M(z)=\bbE[ e^{ \sum_{j=1}^m z_jB_j } ]$. Define their joint cumulants to be (see, e.g., \cite{novak2014three})
\begin{align}
    \cum(B_1,\ldots,B_m) &=\left.\partial_{z_1}\cdots\partial_{z_m}\log M(z)\right|_{z=0} \nonumber  \\
    &=\sum_{\pi\in\Pi([m])} (-1)^{|\pi|-1}(|\pi|-1)! \prod_{A\in\pi} \bbE\left[ \prod_{j\in A} B_j \right],  \label{eq:cum-defn}
\end{align}
where $\Pi$ is the set of all partitions of $[m]$ (here we are considering partitions of $m$ labeled elements into nonempty unlabeled parts), and for a given $\pi\in\Pi$ regard $\pi$ to be the set of all parts of the partition, and we use $|\pi|$ to denote the number of parts in the partition. 
For a multi-index $\alpha\in\bbN^{\mathcal E_n}$, put $|\alpha|=\sum_e\alpha_e$, $\alpha!=\prod_e\alpha_e!$, and $x^\alpha=\prod_e x_e^{\alpha_e}$. Let $h_k$ be the orthonormal probabilists' Hermite polynomial, and $h_\alpha(W)=\prod_e h_{\alpha_e}(W_e)$. For multi-indices $\alpha,\beta\in\bbN^{\cE_n}$, write $\alpha\leq\beta$ when $\alpha_e\leq\beta_e$ for every $e$, and put $\binom{\beta}{\alpha}=\beta!/(\alpha!(\beta-\alpha)!)$ when $\alpha\leq\beta$. 
For each $\alpha$, define the joint cumulant
\[
    \cC_\alpha=\cum\big( \btheta_1, \underbrace{x_e(\btheta),\ldots,x_e(\btheta)}_{\alpha_e\text{ identical copies}}: e\in\mathcal E_n \big),
\]
where $\cC_0=\bbE[\theta_1]$. It was shown in \cite{schramm2022computational, sohn2025sharp} that
\[
    u^\sharp(\btheta,W)=\sum_{|\alpha|\leq D} \frac{(\lambda/\sqrt n)^{|\alpha|}}{\sqrt{\alpha!}} \cC_\alpha h_\alpha(W)
\]
is a degree-$D$ certificate for $\btheta_1$. Thus, we have the following bound on low-degree MMSE
\begin{equation}{\label{eq:simple-bound-mmse}}
    \MMSE_{\le D} \ge \rho-\bbE\left[ u^\sharp(\btheta,W)^2 \right] \ge  \rho- \sum_{|\alpha| \le D} \frac{ \lambda^{2|\alpha|} }{ n^{|\alpha|}\alpha! } \cC_\alpha^2.
\end{equation}
However, the bound in \eqref{eq:simple-bound-mmse} need not be sharp for the problem considered here. A source of slack in the unconditional approach is the Jensen inequality
\[
    \bbE_{\btheta,\bW}[f(\bY)^2]
    \geq \bbE_{\bW}\left[
        \left(\bbE_{\btheta}
            f\left((\lambda/\sqrt n)\btheta\btheta^\top+\bW\right)
        \right)^2\right].
\]
The gap is the expectation, over $\bW$, of the conditional variance of $f((\lambda/\sqrt n)\btheta\btheta^\top+\bW)$ over $\btheta$. Equality holds precisely when that conditional variance is zero almost surely. This can be a substantial loss when a useful estimator varies with the signal. Our first conceptual input is to construct certificates using \emph{conditional} joint cumulants, which retain signal dependence and create additional cancellation.

We now explain the idea in more detail. For a sigma-algebra $\mathcal G$, the conditional cumulant is given by
\begin{equation}\label{eq:conditional-cum-def}
    \cum(B_1,\ldots,B_m\mid\mathcal G) =\sum_{\pi\in\Pi([m])}(-1)^{|\pi|-1}(|\pi|-1)! \prod_{A\in\pi}\bbE\left[\prod_{j\in A}B_j\mid\mathcal G\right].
\end{equation}
\begin{lem}{\label{lem:bound-mmse}}
    For any auxiliary element $\bR=\bR(\btheta,\bZ) \in \mathbb R^N$ where $\bZ$ is external randomness independent of $(\btheta,\bW)$, define
    \[
        \cK_\alpha(\bR)=\cum\big( \btheta_1, \underbrace{x_e(\btheta),\ldots,x_e(\btheta)}_{\alpha_e\text{ identical copies}}:e\in\mathcal E_n\mid \bR \big),
    \]
    where $\cK_0(\bR)=\bbE[\btheta_1\mid \bR]$. Set
    \[
        u(\btheta,\bW)=\sum_{|\alpha|\leq D} \frac{(\lambda/\sqrt{n})^{|\alpha|}}{\sqrt{\alpha!}} A_\alpha(\btheta) h_\alpha(\bW), \qquad A_\alpha(\btheta)=\bbE[\cK_\alpha(\bR)\mid\btheta].
    \]
    Then $u(\btheta,\bW)$ is a degree-$D$ certificate for $\btheta_1$. Consequently, 
    \begin{equation}{\label{eq:delicate-bound-mmse}}
        \MMSE_{\le D} \ge \rho - \sum_{|\alpha|\le D} \frac{ \lambda^{2|\alpha|} }{ n^{|\alpha|}\alpha! } \bbE\left[ A_\alpha(\btheta)^2 \right].
    \end{equation}
\end{lem}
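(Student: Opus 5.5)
The plan is to reduce the statement to the known unconditional cumulant certificate of \cite{schramm2022computational, sohn2025sharp}, applied conditionally on $\bR$, and then to average out the external randomness. The key observation is that conditioning on $\bR$ does not change the structure of the model. Since $\bR=\bR(\btheta,\bZ)$ with $\bZ$ independent of $(\btheta,\bW)$, the noise $\bW$ is independent of $(\btheta,\bR)$. Hence, conditionally on $\bR$, $\bY=(\lambda/\sqrt n)\btheta\btheta^\top+\bW$ is again a Gaussian additive model. Its signal $\btheta$ now has the (bounded, possibly non-product) conditional law $\mathcal L(\btheta\mid\bR)$, and its noise is independent standard Gaussian.

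\textbf{Step 1 (conditional certificate).} I first check that the cumulant certificate identity holds for an arbitrary bounded prior; this is the only real content of the argument. Fix a multi-index $\beta$ with $|\beta|\le D$. Since $\bbE[h_k(\mu+W)]=\mu^k/\sqrt{k!}$ for $W\sim\cN(0,1)$, we get
\[
\bbE[\btheta_1 h_\beta(\bY)\mid\bR]=\frac{(\lambda/\sqrt n)^{|\beta|}}{\sqrt{\beta!}}\,\bbE[\btheta_1 x^\beta\mid\bR].
\]
Now set $\tilde u=\sum_{|\alpha|\le D}\frac{(\lambda/\sqrt n)^{|\alpha|}}{\sqrt{\alpha!}}\cK_\alpha(\bR)h_\alpha(\bW)$. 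Expanding $h_\beta(\bY)$ by the Hermite shift formula, $h_\beta(\bY)=\sum_{\alpha\le\beta}\sqrt{\binom{\beta}{\alpha}}\,h_\alpha(\bW)\,\frac{(\lambda x/\sqrt n)^{\beta-\alpha}}{\sqrt{(\beta-\alpha)!}}$. Orthonormality of $h_\alpha(\bW)$, together with independence of $\bW$ from $(\btheta,\bR)$, then gives
\[
\bbE[\tilde u\, h_\beta(\bY)\mid\bR]=\frac{(\lambda/\sqrt n)^{|\beta|}}{\sqrt{\beta!}}\sum_{\alpha\le\beta}\binom{\beta}{\alpha}\cK_\alpha(\bR)\,\bbE[x^{\beta-\alpha}\mid\bR].
\]
The required equality with $\bbE[\btheta_1x^\beta\mid\bR]$ is exactly the recursive moment–cumulant identity $\bbE[B_0\prod_j B_j]=\sum_{S}\cum(B_0,B_S)\,\bbE[\prod_{j\notin S}B_j]$, applied under the conditional measure with repeated copies counted by $\binom{\beta}{\alpha}$. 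This identity follows from the partition formula \eqref{eq:conditional-cum-def} by grouping partitions according to the block containing $B_0$. Since polynomials of degree $\le D$ in $\bY$ are spanned by $\{h_\beta(\bY):|\beta|\le D\}$, we conclude $\bbE[\tilde u f(\bY)\mid\bR]=\bbE[\btheta_1 f(\bY)\mid\bR]$ for all such $f$. Taking expectations removes the conditioning.

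\textbf{Step 2 (projecting out $\bZ$).} The quantity $\tilde u$ depends on $\bZ$, whereas a certificate must be a function of $(\btheta,\bW)$. Since $f(\bY)$ is $\sigma(\btheta,\bW)$-measurable, $\bbE[\tilde u f(\bY)]=\bbE[\bbE[\tilde u\mid\btheta,\bW]f(\bY)]$. Because $\bZ$ is independent of $(\btheta,\bW)$ and $\bR$ is a function of $(\btheta,\bZ)$, we have $\bbE[\cK_\alpha(\bR)\mid\btheta,\bW]=\bbE[\cK_\alpha(\bR)\mid\btheta]=A_\alpha(\btheta)$. Hence $\bbE[\tilde u\mid\btheta,\bW]=u$, and $u$ satisfies \eqref{eq:certificate}. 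Square-integrability is automatic: $\btheta\in\{0,1\}^n$ makes every conditional cumulant bounded, and the sum over $\alpha$ is finite.

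\textbf{Step 3 (norm).} Independence of $\bW$ and $\btheta$, together with orthonormality of $\{h_\alpha\}$, gives
\[
\bbE[u^2]=\sum_{|\alpha|\le D}\frac{\lambda^{2|\alpha|}}{n^{|\alpha|}\alpha!}\bbE[A_\alpha(\btheta)^2],
\]
and Lemma~\ref{lem:duality} yields \eqref{eq:delicate-bound-mmse}.

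I expect the only delicate point to be the bookkeeping in Step 1. There one must confirm that the conditional moment–cumulant recursion holds with the multiplicity factors $\binom{\beta}{\alpha}$ arising from repeated copies of $x_e$. One also needs that it holds for a non-product conditional law; it does, since the recursion is purely algebraic.
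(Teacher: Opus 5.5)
Your proposal is correct and follows essentially the same route as the paper. Both use the conditional moment--cumulant recursion with multiplicities $\binom{\beta}{\alpha}$, the Hermite shift formula and orthonormality on the basis $h_\beta(\bY)$, and the tower property to replace $\cK_\alpha(\bR)$ by $A_\alpha(\btheta)$, followed by the orthonormal norm computation. The differences are cosmetic. You project $\tilde u$ onto $\sigma(\btheta,\bW)$, while the paper uses the $\btheta$-measurability of $x^{\beta-\alpha}$ inside the expectation. The paper derives the recursion from the formal logarithm of the conditional moment generating function, whereas your partition-grouping argument uses the inverse (moments-in-terms-of-cumulants) form of \eqref{eq:conditional-cum-def}, which is equally standard.
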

\begin{proof}
    We first derive a conditional moment--cumulant identity and then verify \eqref{eq:certificate} on the Hermite basis. Since $\btheta_1$ and all $x_e$ take values in $\{0,1\}$, the partition formula \eqref{eq:conditional-cum-def} shows that $\cK_\alpha(\bR)$ is bounded for each $\alpha$. Hence $A_\alpha(\btheta)$ is also bounded, and all expectations below are finite. We claim that, almost surely,
    \begin{equation}{\label{eq:given-R-expectation}}
        \bbE[\btheta_1x^\beta\mid\bR] =\sum_{\alpha\leq\beta}\binom{\beta}{\alpha} \cK_\alpha(\bR)\, \bbE[x^{\beta-\alpha}\mid\bR].
    \end{equation}
    To see this directly from \eqref{eq:conditional-cum-def}, consider formal variables $z_0$ and $(z_e)_{e\in\cE_n}$. The conditional moment generating function has constant term one, so its formal logarithm is well defined. Expanding this logarithm and using \eqref{eq:conditional-cum-def} gives
    \[
        \left.\frac{\partial}{\partial z_0} \log\bbE\left[ \exp\left(z_0\btheta_1+\sum_{e\in\cE_n}z_ex_e\right)
        \mid \bR \right] \right|_{z_0=0} =\sum_{\alpha\in\bbN^{\cE_n}} \frac{\cK_\alpha(\bR)}{\alpha!} \prod_{e\in\cE_n}z_e^{\alpha_e}.
    \]
    Indeed, when taking derivatives at zero, regard repeated copies of each $x_e$ as labeled. In the $r$th term of the logarithmic expansion, each partition into $r$ nonempty blocks is counted $r!$ times, so its coefficient is $(-1)^{r-1}r!/r=(-1)^{r-1}(r-1)!$, exactly as in \eqref{eq:conditional-cum-def}. Multiplying the last display by $\bbE[\exp(\sum_ez_ex_e)\mid\bR]$ yields
    \[
        \bbE\left[ \btheta_1\exp\left(\sum_{e\in\cE_n}z_ex_e\right) \mid \bR \right] =\bbE\left[ \exp\left(\sum_{e\in\cE_n}z_ex_e\right) \mid\bR \right] \sum_{\alpha\in\bbN^{\cE_n}} \frac{\cK_\alpha(\bR)}{\alpha!} \prod_{e\in\cE_n}z_e^{\alpha_e}.
    \]
    Comparing the coefficients of $\prod_ez_e^{\beta_e}$ and multiplying by $\beta!$ proves the claimed identity. 

    We now verify \eqref{eq:certificate}. Since $h_\alpha(\bW)=\prod_{e\in\cE_n}h_{\alpha_e}(\bW_e)$ and the coordinates of $\bW$ are independent standard Gaussian variables, the family $(h_\alpha(\bW))_\alpha$ is orthonormal. Applying Lemma~\ref{lem:Hermite-expansion} to $\bY=(\lambda/\sqrt n) \btheta\btheta^{\top}+\bW$ gives
    \[
    \bbE[h_\alpha(\bW)h_\beta(\bY)\mid\btheta]=
    \begin{cases}
        \displaystyle
        \frac{\sqrt{\beta!}}{\sqrt{\alpha!}(\beta-\alpha)!} (\lambda/\sqrt n)^{|\beta|-|\alpha|}x^{\beta-\alpha}, &\alpha\leq\beta,\\
        0,&\text{otherwise}.
    \end{cases}
    \]
    Fix $\beta$ with $|\beta|\leq D$. Since $\alpha\leq\beta$ implies $|\alpha|\leq D$, substituting the definition of $u$ into the last identity yields
    \begin{align*}
        \bbE[u(\btheta,\bW)h_\beta(\bY)] &=\frac{(\lambda/\sqrt n)^{|\beta|}}{\sqrt{\beta!}} \sum_{\alpha\leq\beta}\binom{\beta}{\alpha} \bbE[A_\alpha(\btheta)x^{\beta-\alpha}]  \\
        &= \frac{(\lambda/\sqrt n)^{|\beta|}}{\sqrt{\beta!}} \sum_{\alpha\leq\beta}\binom{\beta}{\alpha} \bbE[\cK_\alpha(\bR)x^{\beta-\alpha}]  \\
        &=\frac{(\lambda/\sqrt n)^{|\beta|}}{\sqrt{\beta!}} \bbE[\btheta_1x^\beta],
    \end{align*}
    where the second equality follows from the definition of $A_\alpha$ and the measurability of $x^{\beta-\alpha}$ with respect to $\btheta$, and the last equality follows from taking expectation with respect to $\bR$ in \eqref{eq:given-R-expectation}. On the other hand, applying Lemma~\ref{lem:Hermite-expansion} to $\bY=(\lambda/\sqrt n) \btheta\btheta^{\top}+\bW$ gives
    \[
       \bbE[\btheta_1h_\beta(\bY)] =\frac{(\lambda/\sqrt n)^{|\beta|}}{\sqrt{\beta!}} \bbE[\btheta_1x^\beta].
    \]
    Consequently, $\bbE[uh_\beta(\bY)]=\bbE[\btheta_1h_\beta(\bY)]$ for every $|\beta|\leq D$. As the polynomials $\{h_\beta(\bY):|\beta|\leq D\}$ form a basis for the scalar polynomials of total degree at most $D$ in the upper-triangular observations, \eqref{eq:certificate} follows, so $u$ is a degree-$D$ certificate. Finally, orthonormality of the Hermite polynomials conditionally on $\btheta$ gives
    \[
        \bbE[u^2]=\sum_{|\alpha|\le D}
            \frac{\lambda^{2|\alpha|}}{n^{|\alpha|}\alpha!}
            \bbE[A_\alpha(\btheta)^2].
    \]
    Applying Lemma~\ref{lem:duality} proves \eqref{eq:delicate-bound-mmse}.
\end{proof}

By Lemma~\ref{lem:bound-mmse}, we would like to choose $\bR$ so that \eqref{eq:delicate-bound-mmse} is as large as possible. Before specifying $\bR$, it is useful to discuss what we should expect from this auxiliary observation. First, it should reveal some information about $\btheta$, but not too much. Indeed, if $\bR$ is independent of $\btheta$, then $A_\alpha(\btheta)=\cC_\alpha$, and we recover the unconditional bound \eqref{eq:simple-bound-mmse}. At the opposite extreme, taking $\bR=\btheta$ makes every nonzero-order conditional cumulant vanish, but also gives $A_0(\btheta)=\btheta_1$. The right hand side of \eqref{eq:delicate-bound-mmse} then equals $\rho-\bbE[\btheta_1^2]=0$. Thus, eliminating the higher-order terms alone is not the objective: we must balance their contribution against the zero-order cost $\bbE[A_0(\btheta)^2]$. Heuristically, $\bR$ should account for the part of the signal that low-degree estimators can already recover, leaving a tractable description of the remaining uncertainty. Second, conditioning on $\bR$ should preserve the independence of the signal coordinates. This ensures that conditional cumulants involving independent groups of coordinates still vanish, so that only connected configurations involving coordinate $1$ can contribute. 

The final choice of such a reference observation is motivated by the scalar Gaussian channels used in the interpolation arguments underlying the statistical characterization of this problem \cite{lelarge2017fundamental}. These arguments compare the coupled matrix observation with independent scalar observations of the form $\sqrt{\lambda q}\btheta_i+\bZ_i$. The scalar model is explicitly tractable, while Gaussian integration by parts relates it to the original model. The statistical variational formula selects the global minimizer $q_\bayes$ of $\Psi$. Here, however, we seek a lower bound at the AMP error level, which suggests using the same scalar-channel family at the smallest stable stationary point $q_\amp$, rather than at $q_\bayes$. Accordingly, we choose
\begin{equation}\label{eq:choice-bR}
    \bR=\sqrt{s}\btheta+\bZ, \qquad s=\lambda q_\amp,
\end{equation}
where $\bZ\sim\cN(0,I_n)$ is independent of $(\btheta,\bW)$. Here $\bR$ is used only to construct the certificate; it is not supplied to the estimator as an additional observation. This choice meets the preceding requirements in a quantitative way. The conditional law of $\btheta$ given $\bR$ is a product of scalar Bernoulli posterior distributions. Moreover, the stationarity and stability conditions $\Psi'(q_\amp)=0$ and $\Psi''(q_\amp)\geq0$ imply
\[
    \bbE\left[\big(\bbE[\btheta_1\mid\bR]\big)^2\right]
        =\frac{q_\amp}{\lambda},
    \qquad
    \lambda^2\bbE\left[
        \big(\operatorname{Var}(\btheta_1\mid\bR)\big)^2
    \right]\leq1,
\]
as proved in Lemma~\ref{lem:fixed-identity}. The first identity says that the scalar channel has the same mean squared estimation error $\rho-q_\amp/\lambda$ as Bayes AMP. It also expresses the self-consistency of its strength: $s=\lambda^2(q_\amp/\lambda)$. Gaussian integration by parts and Hermite orthogonality turn this scalar second-moment bound into a bound on the entire rooted-tree contribution, including the zero-order term; the same self-consistency is what closes the recursive tree bound in Lemma~\ref{lem:contribution-rooted-tree}. The second inequality prevents long chains in the remaining graph expansion from producing exponential amplification. Thus, the Gaussian channel supplies both a tractable conditional law and the identities needed to control the certificate at the desired AMP scale.

We now bound the right hand side of \eqref{eq:delicate-bound-mmse} under this specific choice of $\bR$. From now on, we will regard $\alpha$ as a multigraph (with possibly multiple self-loops) on $[n]$, with each edge or self-loop $e=(i,j)$ occurs $\alpha_e$ times. Denote by $V(\alpha)=\cup_{(i,j)\in\cE_n:\,\alpha_{ij}>0}\{i,j\}$ its vertex set. Each edge copy contributes one to $|\alpha|$; in vertex degrees, each loop copy contributes two. The zero multi-index is handled separately. Also define $\tau(\alpha)=|\alpha|-|V(\alpha)|+1$ to be the excess of $\alpha$. We will split \eqref{eq:delicate-bound-mmse} into two parts. The first part consists of summations over $\alpha$'s that are simple trees that contains the vertex $1$, which contribute no more than $q_\amp/\lambda$.
\begin{lem}{\label{lem:contribution-rooted-tree}}
    Denote by $\cT$ the collection of nonempty $\alpha$ that are connected and $|\alpha|=|V(\alpha)|-1$. For the choice of $\bR$ in \eqref{eq:choice-bR}, we have
    \[
        \bbE\left[ A_0(\btheta)^2 \right] + \sum_{|\alpha|\le D,\alpha \in \cT,1 \in V(\alpha)} \frac{ \lambda^{2|\alpha|} }{ n^{|\alpha|}\alpha! } \bbE\left[ A_\alpha(\btheta)^2 \right] \le q_\amp/\lambda.
    \]
\end{lem}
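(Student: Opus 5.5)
The plan is to compute $A_\alpha(\btheta)$ exactly for trees, using the conditional product structure given $\bR$. Then sum over trees with a Galton--Watson-type recursion, and close the recursion with Gaussian integration by parts plus the fixed-point identity $\bbE[(\bbE[\btheta_1\mid\bR])^2]=q_\amp/\lambda$ from Lemma~\ref{lem:fixed-identity}.

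\textbf{Step 1: cumulants on trees factorize.} Given $\bR$, the $\btheta_v$ are independent, $\btheta_v$ depends only on $\bR_v$, and $m_v=\bbE[\btheta_v\mid\bR_v]$. I would apply the Leonov--Shiryaev formula for joint cumulants of products of (conditionally) independent variables. Each edge variable $x_{ij}=\btheta_i\btheta_j$ is the product of two elementary copies, and $\btheta_1$ contributes one extra copy at the root. The cumulant is a sum over partitions $\sigma$ of the elementary copies whose join with the edge partition is the full partition, weighted by products of block cumulants. By independence, only blocks lying within a single vertex survive. Since $\alpha\in\cT$ is a simple tree (no loops or multi-edges, because the excess is zero), connectivity forces all copies at each vertex into one block. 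This gives
\[
\cK_\alpha(\bR)=\prod_{v\in V(\alpha)}\kappa_{d_v}(\bR_v),\qquad \kappa_d(\bR_v)=\cum_d(\btheta_v\mid\bR_v).
\]
Here $d_v$ is the degree of $v$, plus one at the root $1$. Equivalently, after rooting the tree at $1$, $d_v=c_v+1$ where $c_v$ is the number of children of $v$. Since the pairs $(\btheta_v,\bR_v)$ are independent across $v$, we get $A_\alpha=\prod_v\bbE[\kappa_{d_v}\mid\btheta_v]$ and $\bbE[A_\alpha^2]=\prod_v b_{c_v+1}$, where $b_d:=\bbE[(\bbE[\kappa_d(\bR_1)\mid\btheta_1])^2]$. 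The zero-order term $\bbE[A_0^2]=b_1$ is the one-vertex tree.

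\textbf{Step 2: counting.} Each tree has $\alpha!=1$. The number of labelled trees on $[n]$ containing $1$ of a given rooted unordered shape is at most $n^{|\alpha|}$ divided by the symmetry factor, and for every rooted shape that factor is bounded by $\prod_v c_v!^{-1}$ in the usual exponential-generating-function bookkeeping. I would therefore dominate the left side of the lemma by $T_\infty=\lim_h T_h$, where
\[
T_0=b_1,\qquad T_{h+1}=\Phi(T_h),\qquad \Phi(t)=\sum_{c\ge0}\frac{(\lambda^2t)^c}{c!}\,b_{c+1}.
\]
This is the rooted-tree generating function truncated by depth; truncating at $|\alpha|\le D$ only drops nonnegative terms. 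The factor $\lambda^{2|\alpha|}n^{-|\alpha|}$ supplies one $\lambda^2$ per child and cancels the $n^{|\alpha|}$ count.

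\textbf{Step 3: closing the recursion (main step).} The conditional law of $\btheta_1$ given $\bR_1$ is an exponential family in the field $h=\sqrt s\,\bR_1$, so $\kappa_{c+1}=\partial_h^c m$. Since $\partial_{\bR_1}=\sqrt s\,\partial_h$, Gaussian integration by parts in $\bZ_1$ (conditionally on $\btheta_1$) gives
\[
\bbE[\partial_h^c m\mid\btheta_1]=s^{-c/2}\sqrt{c!}\;\bbE[m\,h_c(\bZ_1)\mid\btheta_1],
\]
hence
\[
\frac{b_{c+1}}{c!}=s^{-c}\,\bbE\big[(\bbE[m h_c(\bZ_1)\mid\btheta_1])^2\big].
\]
Therefore $\Phi(t)=\sum_c(\lambda^2t/s)^c\,\bbE[(\bbE[mh_c(\bZ_1)\mid\btheta_1])^2]$. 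At $t=q_\amp/\lambda$ we have $\lambda^2t/s=1$ because $s=\lambda q_\amp$. Parseval in the Hermite basis of $\bZ_1$, conditionally on $\btheta_1$, then yields $\Phi(q_\amp/\lambda)=\bbE[m^2]=q_\amp/\lambda$ by Lemma~\ref{lem:fixed-identity}. The map $\Phi$ has nonnegative coefficients, so it is increasing, and $T_0=b_1=\bbE[(\bbE[m\mid\btheta_1])^2]\le\bbE[m^2]=q_\amp/\lambda$. Induction then gives $T_h\le q_\amp/\lambda$ for all $h$, which proves the lemma.

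The step I expect to need the most care is Step 2: making the counting inequality rigorous, with symmetry factors matching $\prod c_v!$, and justifying the depth truncation together with termwise monotone convergence. The analytic core in Step 3 is clean once the derivative identity $\kappa_{c+1}=\partial_h^c m$ is in place.
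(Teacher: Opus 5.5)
Your proposal is correct and is essentially the paper's proof. It uses the same tree factorization $\bC_\alpha(h)=\prod_{v}\eta^{(c_v)}(h_v)$; you derive it from the Leonov--Shiryaev expansion, while the paper derives it from its leaf-removal lemma. It also uses the same Gaussian integration by parts plus Hermite Bessel/Parseval bound $\sum_{k}s^k b_{k+1}/k!\le\bbE[\eta(\bH_1)^2]=q_\amp/\lambda$, and the same monotone recursion closed by $s=\lambda q_\amp$; you truncate by depth where the paper inducts on the number of edges.

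The one loose spot is the wording of Step~2. In the plane-tree expansion of $T_\infty$, an unordered rooted shape $T$ carries weight $1/\aut(T)$, which is at least (not at most) $\prod_v c_v!^{-1}$, so the per-shape comparison has to be made against $1/\aut(T)$. The domination you want still holds. A simpler fix is the paper's argument: order the root's children, drop the disjointness constraints, and bound each child subtree's total weight by $n$ times the previous level. This gives $U_{h+1}\le\Phi(U_h)$ directly for the labeled sums $U_h$ over trees of depth at most $h$.
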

We then bound the contribution from the remaining $\alpha$, showing that they contribute an $o(1)$ term.
\begin{lem}{\label{lem:contribution-non-principle}}
    For the choice of $\bR$ in \eqref{eq:choice-bR}, we have:
    \begin{enumerate}
        \item[(1)] For any $|\alpha| \ge 1$ such that $1 \notin V(\alpha)$ or if $\alpha$ is disconnected, we have $A_\alpha(\btheta)=0$.
        \item[(2)] denote by $\cJ$ the collection of connected non-empty $\alpha$ with $\tau(\alpha) \ge 1$, then
        \[
        \sum_{|\alpha|\le D,\alpha \in \cJ,1 \in V(\alpha)} \frac{ \lambda^{2|\alpha|} }{ n^{|\alpha|}\alpha! } \bbE\left[ A_\alpha(\btheta)^2 \right] \le \sum_{g=1}^{D} \left( \frac{(C_0(1+\lambda)(D+1))^{C_0}}{n} \right)^g.
        \]
    \end{enumerate}
\end{lem}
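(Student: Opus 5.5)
For part (1) I would use only the product structure of the conditional law. By \eqref{eq:choice-bR}, the $(\btheta_i,\bR_i)$ are i.i.d. across $i$, so given $\bR$ the coordinates $\btheta_1,\dots,\btheta_n$ are conditionally independent Bernoulli variables with means $p_i=\bbE[\btheta_i\mid\bR_i]$.
\begin{itemize}
\item If $1\notin V(\alpha)$, then $\btheta_1$ is conditionally independent of the family $\{x_e:\alpha_e>0\}$.
\item If $\alpha$ is disconnected, the edge copies of a component not containing $1$ form a group conditionally independent of all remaining variables, including $\btheta_1$.
\end{itemize}
A joint cumulant vanishes whenever the variables split into two nonempty mutually independent groups. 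This is standard from the log-MGF definition, and it holds verbatim conditionally on $\bR$. Hence $\cK_\alpha(\bR)=0$ almost surely, and so $A_\alpha(\btheta)=\bbE[\cK_\alpha(\bR)\mid\btheta]=0$.

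For part (2) I would first drop the conditioning on $\btheta$ using Jensen: $\bbE[A_\alpha^2]\le\bbE[\cK_\alpha(\bR)^2]$. Next I would expand $\cK_\alpha(\bR)$ by the Leonov--Shiryaev formula for cumulants of products. Each $x_e=\btheta_i\btheta_j$ is a product, and $\btheta_1$ is an additional singleton. Write the vertex-occurrence multiset of $\alpha$ together with the root as $S$. Then $\cK_\alpha(\bR)$ is a sum over partitions $\sigma$ of $S$ such that:
\begin{itemize}
\item each block of $\sigma$ lies within a single vertex;
\item $\sigma$ joined with the edge partition is connected.
\end{itemize}
Each term is $\prod_{\text{blocks}}\kappa_{|b|}(p_v)$, where $\kappa_k(p)$ is the $k$-th cumulant of $\ber(p)$. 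Connectivity of $\alpha$ already forces the join condition on vertex blocks, so the number of admissible $\sigma$ is at most $\prod_v B_{d_v}$ (Bell numbers in the degrees $d_v$). Every $\kappa_k(p)$ with $k\ge2$ carries a factor $p(1-p)=\mathrm{Var}(\btheta_v\mid\bR)$ and is bounded by $(k-1)!\,p(1-p)$. Since the terms are independent across vertices, $\bbE[\cK_\alpha^2]$ is bounded by a product over vertices of $\bbE[(\text{vertex factor})^2]$. For a degree-$2$ vertex this is at most $4\,\bbE[\mathrm{Var}(\btheta_v\mid\bR)^2]$, which I would sharpen to exactly the quantity controlled by $\lambda^2\bbE[\mathrm{Var}^2]\le1$ from Lemma~\ref{lem:fixed-identity} after treating the $\sigma$ with all-singleton blocks (the dominant term) separately. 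Each degree-$2$ vertex on a path then costs at most $\lambda^{-2}$, which exactly cancels the $\lambda^2$ of the extra edge: long chains do not amplify.

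The counting is organized by excess $g=\tau(\alpha)\ge1$. Decompose $\alpha$ into its $2$-core and pendant trees. The core has at most $2g$ branch vertices and $3g$ branch paths; loops and multi-edges each raise the excess and are absorbed here. The choices of vertex labels cost at most $n^{|V(\alpha)|-1}$, against the weight $n^{-|\alpha|}$, so their product is $n^{-g}$.
\begin{itemize}
\item \emph{Paths.} Their contribution is controlled by the chain bound above, and their lengths (at most $D$ each) give a factor $(D+1)^{3g}$.
\item \emph{Pendant trees.} These I would bound using the recursive tree estimate behind Lemma~\ref{lem:contribution-rooted-tree}, which shows that summing over trees rooted at a given vertex contributes a bounded factor. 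The self-consistency $s=\lambda^2(q_\amp/\lambda)$ makes that recursion close.
\item \emph{Branch vertices.} The Bell-number and cumulant factorials at the at most $2g$ branch vertices of degree at most $2D$ contribute $(C(D+1))^{Cg}$.
\end{itemize}
Dividing by $\alpha!$ only helps. Collecting these factors gives a total at excess $g$ of at most $\big((C_0(1+\lambda)(D+1))^{C_0}/n\big)^g$, where the $(1+\lambda)$ absorbs the $\lambda^{2}$ factors at branch edges. Summing over $1\le g\le D$ gives the claim.

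The main obstacle is the interaction between pendant trees and the core. A tree attached at a core vertex raises that vertex's degree, so the vertex factor no longer splits cleanly into (core part)$\times$(tree part), and the crude Bell-number bound would destroy the sharp per-tree constant. My first attempt would be to condition on the core vertex's $\bR_v$ and use the rooted-tree recursion conditionally, with $\bbE[\cdot\mid\bR_v]$ in place of the unconditional tree bound. The resulting factor would then be a bounded function of $p_v$, to be absorbed into the constants at the at most $2g$ branch vertices or the path vertices. Whether this closes with a constant independent of $D$ on path vertices is the delicate point; if it does not, I would allow pendant trees only at branch vertices by re-rooting, so as to pay only $(D+1)^{O(g)}$.
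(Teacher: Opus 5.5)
Part (1) is correct and matches the paper's argument. Part (2) breaks at its first step, the Jensen relaxation $\bbE[A_\alpha(\btheta)^2]\le\bbE[\cK_\alpha(\bR)^2]$, because the relaxed sum is genuinely too large.

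To see this, take $\alpha$ to be a self-loop at $1$, the edge $(1,2)$, and $k$ pendant edges $(2,\ell_1),\dots,(2,\ell_k)$. Then $\alpha\in\cJ$ with $\tau(\alpha)=1$, and iterated leaf removal gives $\cK_\alpha(\bR)=\eta''(\bH_1)\,\eta^{(k)}(\bH_2)\prod_j\eta(\bH_{\ell_j})$. Sum over the roughly $n^{k+1}/k!$ labelings and use $\bbE[\eta(\bH_1)^2]=q_\amp/\lambda$. This family then contributes about
\[
    \frac{\lambda^4\,\bbE[\eta''(\bH_1)^2]}{n}\cdot\frac{s^k\,\bbE[\eta^{(k)}(\bH_1)^2]}{k!}
\]
to your relaxed sum. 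Now $\eta$ is a shifted logistic function with poles at distance $\pi$ from the real axis. Hence $\bbE[\eta^{(k)}(\bH_1)^2]$ grows like $(k!)^2\pi^{-2k}$ up to polynomial factors in $k$, and the display is of order $k!\,(s/\pi^2)^k/n$. With $k=D-2$ this is superpolynomial in $D$, so it already violates the target bound (for instance when $D\asymp\log n$).

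The same computation shows that the tree recursion of Lemma~\ref{lem:contribution-rooted-tree}, which you want to reuse for pendant trees, is unavailable once the conditioning is dropped. That recursion runs on $a_k=\bbE[(\bbE[\eta^{(k)}(\bH_1)\mid\btheta_1])^2]$, whereas $\sum_k s^k\bbE[\eta^{(k)}(\bH_1)^2]/k!$ diverges. So the ``main obstacle'' you flag cannot be resolved by conditioning on $\bR_v$; note that the blow-up above sits at vertex $2$, inside a pendant tree. Your fallback also fails: the sum runs over all $\alpha$, and treating every vertex that carries a pendant tree as a branch vertex destroys the bound $2g$ on their number.

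The missing idea is to keep $A_\alpha=\bbE[\cK_\alpha(\bR)\mid\btheta]$ until the pendant trees have been removed. Conditionally on $\btheta$ we have $\bH_i=s\btheta_i-s/2+\sqrt s\,\bZ_i$, so Gaussian integration by parts together with Bessel's inequality gives
\[
    \sum_{k\in\bbN^{V(\gamma)}}\frac{s^{|k|}}{k!}\bigl(\bbE[(\partial^k\bC_\gamma)(\bH)\mid\btheta]\bigr)^2\le\bbE[\bC_\gamma(\bH)^2\mid\btheta].
\]
The Gaussian smoothing is exactly what turns the $(k!)^2$ growth into summable decay. The paper proceeds in three steps:
\begin{enumerate}
\item It writes $\bC_\alpha=(\partial^k\bC_\gamma)\prod_{i\notin V(\gamma)}\eta^{(d_i(\alpha))}(h_i)$, where $\gamma=\core(\alpha)$ is rooted at $1$ and $k_i$ counts the branches at core vertex $i$.
\item It sums each branch via Lemma~\ref{lem:contribution-rooted-tree}, giving weight at most $\lambda^2S_D\le\lambda q_\amp=s$ per branch.
\item The display above then absorbs all pendant trees at cost exactly $1$, however many core vertices carry them. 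Only after this is the conditioning dropped, leaving bare cores.
\end{enumerate}

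There is also a secondary problem in your core bound. Counting partitions by $\prod_vB_{d_v}$ costs a factor $2$ per degree-two vertex. Moreover, the split partition at such a vertex contributes $\eta(h_i)^2$, which carries no variance factor, so a per-vertex bound by $\bbE[\mathrm{Var}(\btheta_v\mid\bR)^2]$ fails. What controls long chains is connectivity of the block multigraph (not of $\alpha$). It forces $\sum_i(|\pi_i|-1)\le\tau(\gamma)$, so at most $\tau(\gamma)$ degree-two vertices are split. Each unsplit one contributes exactly $\eta'(h_i)$, with $\lambda^2\bbE[\eta'(\bH_1)^2]\le1$.
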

We end this section by pointing out that Theorem~\ref{thm:main} directly follows from plugging Lemmas~\ref{lem:contribution-rooted-tree} and \ref{lem:contribution-non-principle} into \eqref{eq:delicate-bound-mmse}.

\section{Contribution from the rooted tree part}

The goal of this section is to prove Lemma~\ref{lem:contribution-rooted-tree}. Note that given $\bR$, we have
\begin{equation}{\label{eq:conditional-mean}}
    \btheta_i \sim \ber(\eta(\bH_i)), \qquad \bH_i= \sqrt{s}\bR_i-\frac{s}{2}, \qquad \eta(h)=\frac{\rho e^h}{1-\rho+\rho e^h},
\end{equation}
and all the coordinates of $\btheta$ are independent to each other and $\bW$. For each $\alpha \in \bbN^{\cE_n}$, let 
\begin{equation}{\label{eq:def-bC-h}}
    \bC_\alpha(h)=\cum\big( \btheta_1, \underbrace{x_e(\btheta),\ldots,x_e(\btheta)}_{\alpha_e\text{ identical copies}}: e\in\mathcal E_n \big) \quad \mbox{where} \quad \btheta_i \sim \ber(\eta(h_i)) \mbox{ independently}.
\end{equation}
Then, we have $\cK_\alpha(\bR)=\bC_\alpha(\bH)$. We first state a leaf removing lemma which will be used repeatedly.
\begin{lem}\label{lem:leaf-remove}
    Suppose $v\neq1$ is a vertex of degree one in $\alpha$, and
    $(v,w)\in\alpha$. Write $\alpha-v$ for the multi-index obtained
    by deleting the unique edge incident to $v$. Then, for every
    $h\in\bbR^n$,
    \begin{equation}\label{eq:leaf-remove}
        \bC_\alpha(h)
        =\eta(h_v)\,\partial_{h_w}\bC_{\alpha-v}(h).
    \end{equation}
\end{lem}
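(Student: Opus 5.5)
The plan is to work with the conditional log-moment generating function and exploit the exponential-family structure of the product Bernoulli law \eqref{eq:conditional-mean}. Fix $h\in\bbR^n$, and let $\btheta$ have independent coordinates $\btheta_i\sim\ber(\eta(h_i))$. Put
\[
    L(z_0,z;h)=\log\bbE_h\Big[\exp\Big(z_0\btheta_1+\sum_{e\in\cE_n}z_ex_e(\btheta)\Big)\Big].
\]
As in the proof of Lemma~\ref{lem:bound-mmse}, $\bC_\alpha(h)=\partial_{z_0}\prod_e\partial_{z_e}^{\alpha_e}L|_{z_0=0,z=0}$. Because all derivatives are taken at $0$, we may first set $z_e=0$ for every $e$ outside the support of $\alpha$.

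First I would record the structural facts. Since $v$ has degree one and a loop contributes two to the degree, the edge $(v,w)$ has $w\ne v$ and multiplicity $\alpha_{vw}=1$. Moreover $v$ belongs to no other edge of $\alpha$, and $v\neq1$, so $\btheta_v$ does not enter $z_0\btheta_1$. After restricting $z$ to the support of $\alpha$, the variable $\btheta_v$ therefore appears in the exponent only through $z_{vw}\btheta_v\btheta_w$.

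Next, integrate out $\btheta_v$. Write $\zeta=z_{vw}$. Conditionally on the other coordinates, $\bbE[e^{\zeta\btheta_v\btheta_w}\mid\btheta_w]=1+\eta(h_v)(e^{\zeta\btheta_w}-1)=e^{\varphi(\zeta)\btheta_w}$, where $\varphi(\zeta)=\log(1+\eta(h_v)(e^\zeta-1))$; the last equality uses $\btheta_w\in\{0,1\}$. The law $\ber(\eta(h_w))$ is an exponential family with natural parameter $h_w+\log(\rho/(1-\rho))$. Tilting by $e^{\varphi\btheta_w}$ therefore yields the law with $h_w$ replaced by $h_w+\varphi$, times the normalizer $N(\varphi,h_w)=1-\eta(h_w)+\eta(h_w)e^{\varphi}$. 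Writing $z'$ for $z$ with the coordinate $\zeta$ removed, this gives
\[
    L(z_0,z;h)=L_{-v}\big(z_0,z';h+\varphi(\zeta)e_w\big)+\log N(\varphi(\zeta),h_w),
\]
where $L_{-v}$ is the same log-MGF without the edge $(v,w)$.

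Finally, differentiate. The normalizer term does not depend on $z_0$, so it is killed by $\partial_{z_0}$. Since $\alpha_{vw}=1$, we apply $\partial_\zeta$ exactly once at $\zeta=0$, using $\varphi(0)=0$ and $\varphi'(0)=\eta(h_v)$. The chain rule then gives $\eta(h_v)\,\partial_{h_w}$ of $\partial_{z_0}\partial^{\alpha-v}_{z'}L_{-v}$ at $0$, which is $\eta(h_v)\partial_{h_w}\bC_{\alpha-v}(h)$. Interchanging $\partial_{h_w}$ with the $z$-derivatives is legitimate because $L$ is real-analytic in $(z_0,z,h)$ near $z=0$, as everything is a finite sum of exponentials.

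The only delicate point is bookkeeping: the degree-one hypothesis must guarantee multiplicity one and no loop at $v$, and the $h$-dependent normalizer must be seen to drop out.
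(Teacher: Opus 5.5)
Your proof is correct, and it reaches the identity by a different route from the paper. The paper argues in two separate steps. First, since $\btheta_v$ is independent of $\btheta_w$ and of all other arguments, $\eta(h_v)$ factors out of the one moment containing $\btheta_v\btheta_w$ in every term of the partition formula, which gives $\bC_\alpha(h)=\eta(h_v)\,\cum(\btheta_w,B_1,\ldots,B_m)$. Second, it attaches a variable $z_0$ to $\btheta_w$ in the MGF $M$ and uses $\partial_{h_w}\log M=\partial_{z_0}\log M-\eta(h_w)$. This shows that inserting an extra argument $\btheta_w$ into a cumulant with at least one argument is the same as applying $\partial_{h_w}$.

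You instead do everything in one generating-function computation. Marginalizing $\btheta_v$ turns $e^{\zeta\btheta_v\btheta_w}$ into $e^{\varphi(\zeta)\btheta_w}$. By the exponential-family form of $\ber(\eta(h_w))$, this is the shift $h_w\mapsto h_w+\varphi(\zeta)$ times a $z_0$-free normalizer. The chain rule with $\varphi(0)=0$ and $\varphi'(0)=\eta(h_v)$ then finishes.

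The paper's decomposition isolates a reusable fact: adding a copy of $\btheta_w$ as an argument equals differentiating in $h_w$. This is exactly the mechanism behind the $\eta^{(d_i)}$ factors in the tree and core computations, and the paper's first step needs only independence. Your route is shorter and gives more. The identity $L=L_{-v}(\,\cdot\,;h+\varphi(\zeta)e_w)+\log N$ is exact in $\zeta$, so expanding it to higher order (Fa\`a di Bruno, with coefficients $\varphi^{(j)}(0)$) would also handle a vertex joined to the rest only by several parallel copies of $(v,w)$. There the paper's factorization step breaks down, since different partition terms would produce different powers of $\eta(h_v)$.
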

\begin{proof}
    Set $m=|\alpha|\geq1$, and let $B_1,\ldots,B_m$ be the variables defining $\bC_{\alpha-v}(h)$: namely, $B_1=\btheta_1$, and $B_2,\ldots,B_m$ are the edge variables $x_e(\btheta)$ with multiplicities prescribed by $\alpha-v$. When $m=1$, the latter list is empty and $\bC_0(h)=\bbE[\btheta_1]=\eta(h_1)$. Thus $\bC_{\alpha-v}(h)=\cum(B_1,\ldots,B_m)$. Since $v\neq1$ has degree one, we have $w\neq v$, the edge $(v,w)$ occurs exactly once, and none of $B_1,\ldots,B_m$ depends on $\btheta_v$. Hence $\btheta_v$ is independent of $(\btheta_w,B_1,\ldots,B_m)$. In every term of the partition formula \eqref{eq:cum-defn} for $\cum(\btheta_v\btheta_w,B_1,\ldots,B_m)$, exactly one block contains the distinguished variable $\btheta_v\btheta_w$. If $A\subseteq[m]$ indexes the other arguments in that block, independence gives
    \[
        \bbE\left[\btheta_v\btheta_w\prod_{j\in A}B_j\right] =\eta(h_v)\, \bbE\left[\btheta_w\prod_{j\in A}B_j\right],
    \]
    with the usual convention that an empty product equals one. The moments corresponding to the other blocks are unchanged. Factoring out $\eta(h_v)$ from every partition term yields
    \begin{equation}{\label{eq:leaf-factorization}}
        \bC_\alpha(h) =\cum(\btheta_v\btheta_w,B_1,\ldots,B_m)
        =\eta(h_v)\,\cum(\btheta_w,B_1,\ldots,B_m).
    \end{equation}
    It remains to show the cumulant on the right hand side equals  $\partial_{h_w}\bC_{\alpha-v}(h)$. Since $\eta'(h_w)=\eta(h_w)(1-\eta(h_w))$, differentiating the two Bernoulli probabilities gives $\partial_{h_w}\bbP(\btheta_w=a) =(a-\eta(h_w))\bbP(\btheta_w=a)$ for $a\in\{0,1\}$. Recall the moment generating function
    \[
        M(z_0,z_1,\ldots,z_m) =\bbE\left[\exp\left( z_0\btheta_w+\sum_{j=1}^m z_jB_j\right)\right].
    \]
    Its dependence on $h$ is only through the product Bernoulli law. Differentiating the finite sum defining $M$, and using that all coordinates other than $w$ have laws independent of $h_w$, we obtain
    \begin{align*}
        \partial_{h_w}M(z_0,z_1,\ldots,z_m)
        &=\bbE\left[(\btheta_w-\eta(h_w))
            \exp\left(z_0\btheta_w+\sum_{j=1}^m z_jB_j\right)
            \right] \\
        &=\partial_{z_0}M(z_0,z_1,\ldots,z_m)
            -\eta(h_w)M(z_0,z_1,\ldots,z_m).
    \end{align*}
    Since $M(0,\ldots,0)=1$, its logarithm is well defined near the origin. Dividing by $M$ gives $\partial_{h_w}\log M =\partial_{z_0}\log M-\eta(h_w)$. Apply $\partial_{z_1}\cdots\partial_{z_m}$ and then set $z_0=z_1=\cdots=z_m=0$. These derivatives commute with $\partial_{h_w}$, because $M$ is a finite sum of exponentials with smooth coefficients in $h$ and is nonzero near the origin. The term $-\eta(h_w)$ disappears because $m\geq1$. Therefore, by the moment-generating-function definition in
    \eqref{eq:cum-defn},
    \[
        \partial_{h_w}\bC_{\alpha-v}(h)
        =\left.\partial_{z_0}\partial_{z_1}\cdots
            \partial_{z_m}\log M\right|_{z_0=z_1=\cdots=z_m=0}
        =\cum(\btheta_w,B_1,\ldots,B_m).
    \]
    Combining it with \eqref{eq:leaf-factorization} proves \eqref{eq:leaf-remove}.
\end{proof}

Now we can finish the proof of Lemma~\ref{lem:contribution-rooted-tree}.
\begin{proof}[Proof of Lemma~\ref{lem:contribution-rooted-tree}]
    Root each nonempty tree $\alpha$ at vertex $1$, and let $d_i(\alpha)$ be the number of children of $i$ in this rooted tree. Repeated application of \eqref{eq:leaf-remove} gives
    \[
        \bC_\alpha(h)=\prod_{i\in V(\alpha)}\eta^{(d_i(\alpha))}(h_i),
    \]
    where $\eta^{(0)}=\eta$. Since $\cK_\alpha(\bR)=\bC_\alpha(\bH)$ and the coordinates of $\bH$ are independent conditionally on $\btheta$, the product formula implies
    \[
        A_\alpha(\btheta) =\prod_{i\in V(\alpha)} \bbE[\eta^{(d_i(\alpha))}(\bH_i)\mid\btheta_i].
    \]
    The coordinates of $\btheta$ are independent and identically distributed, so 
    \[
        \bbE[A_\alpha(\btheta)^2]=\prod_{i\in V(\alpha)}a_{d_i(\alpha)}, \qquad a_k=\bbE[(\bbE[\eta^{(k)}(\bH_1)\mid\btheta_1])^2].
    \] 
    Also, $a_0=\bbE[A_0(\btheta)^2]$, and $\alpha!=1$ for every nonempty tree $\alpha$.

    We next bound a finite sum of the coefficients $a_k$. For each $k\geq0$, Gaussian integration by parts $k$ times, conditionally on $\btheta_1$, gives
    \[
        \bbE[\eta(\bH_1)h_k(\bZ_1)\mid\btheta_1] =\frac{s^{k/2}}{\sqrt{k!}}
        \bbE[\eta^{(k)}(\bH_1)\mid\btheta_1].
    \]
    Since $\bZ_1$ is standard Gaussian independently of $\btheta_1$, Bessel's inequality for the orthonormal Hermite polynomials yields, for every integer $m\geq0$,
    \[
        \sum_{k=0}^{m}\frac{s^k}{k!} \left(\bbE[\eta^{(k)}(\bH_1)\mid\btheta_1]\right)^2 \leq\bbE[\eta(\bH_1)^2\mid\btheta_1].
    \]
    Taking expectations and using Lemma~\ref{lem:fixed-identity}(1), we obtain
    \begin{equation}{\label{eq:a-k-inequality}}
        \sum_{k=0}^{m}s^ka_k/k!\leq q_\amp/\lambda.
    \end{equation}
    It remains to sum the tree weights. For every integer $m\geq0$, put
    \begin{equation}{\label{eq:def-S-m}}
        S_m=a_0+ \sum_{\substack{\alpha\in\cT,\,1\in V(\alpha)\\
                    1\leq|\alpha|\leq m}}
        \left(\frac{\lambda^2}{n}\right)^{|\alpha|}
        \prod_{i\in V(\alpha)}a_{d_i(\alpha)}.
    \end{equation}
    The term $a_0$ represents the tree consisting only of its root. We will show by induction that $S_m\leq q_\amp/\lambda$. The case $m=0$ follows from the applying \eqref{eq:a-k-inequality} with $m=0$. Fix $m\geq1$. A nonempty tree counted by $S_m$ has some $k\in\{1,\ldots,\min(m,n-1)\}$ children at its root. Deleting the root leaves $k$ rooted subtrees, each having at most $m-1$ edges. The original weight is the product of their weights, multiplied by $a_k(\lambda^2/n)^k$. Ordering the $k$ child subtrees counts each original labeled tree exactly $k!$ times. We obtain an upper bound by dropping the requirements that the subtree vertex sets be pairwise disjoint, that they avoid the original root, and that their total number of edges be at most $m-k$. For a prescribed root label, the sum of the weights of trees with at most $m-1$ edges is $S_{m-1}$: the product weights are invariant under relabeling. Allowing this root label to be any of the $n$ labels thus gives total weight $nS_{m-1}$ for each child subtree. All weights are nonnegative, and hence
\begin{align*}
    S_m
    &\leq a_0+
        \sum_{k=1}^{\min(m,n-1)}\frac{a_k}{k!}
        \left(\frac{\lambda^2}{n}\right)^k
        (nS_{m-1})^k\\
    &\leq\sum_{k=0}^{m}\frac{a_k}{k!}
        (\lambda^2S_{m-1})^k \leq\sum_{k=0}^{m}\frac{a_k}{k!}
        (\lambda q_\amp)^k
    \leq\frac{q_\amp}{\lambda}.
\end{align*}
The third inequality uses the induction hypothesis, and the last one
uses $s=\lambda q_\amp$ and the preceding Hermite bound. This closes
the induction. Taking $m=D$ and recalling the product formula lead to the asserted inequality.
\end{proof}

\section{Contribution from the disconnected part}

The goal of this section is to prove Lemma~\ref{lem:contribution-non-principle}(1).

\begin{proof}[Proof of Lemma~\ref{lem:contribution-non-principle}(1)]
    Fix $\alpha$ with $|\alpha|\geq1$. By \eqref{eq:conditional-mean}, the coordinates of $\btheta$ are independent conditionally on $\bR$, with $\btheta_i\sim\ber(\eta(\bH_i))$. Hence $\cK_\alpha(\bR)=\bC_\alpha(\bH)$. We will show that $\bC_\alpha(h)=0$ for every $h\in\bbR^n$ under either of the two conditions in the statement. Now fix $h\in\bbR^n$, and take the coordinates of $\btheta$ to be independent with $\btheta_i\sim\ber(\eta(h_i))$, as in the definition of $\bC_\alpha(h)$. Suppose first that $1\notin V(\alpha)$. Then, $\btheta_1$ is independent of the entire family of edge variables appearing in this cumulant, including their repeated copies. This family is nonempty because $|\alpha|\geq1$. By Lemma~\ref{lem:facts-cumulant}, $\bC_\alpha(h)=0$.

    It remains to consider the case where $1\in V(\alpha)$ and $\alpha$ is disconnected. Choose a connected component of $\alpha$ not containing $1$. This component contains at least one edge, possibly a self-loop. All remaining arguments of $\bC_\alpha(h)$, including $\btheta_1$, depend only on coordinates indexed outside this component. By Lemma~\ref{lem:facts-cumulant}(1), again $\bC_\alpha(h)=0$. 
    
    We have proved $\bC_\alpha(h)=0$ for every $h$ whenever either condition in the statement holds. Substituting $h=\bH$ gives $\cK_\alpha(\bR)=0$ almost surely, and hence
    \[
        A_\alpha(\btheta) =\bbE[\cK_\alpha(\bR)\mid\btheta] =0 \qquad\text{almost surely}.
    \]
    This proves the assertion.
\end{proof}

\section{Contribution from the non-tree part}

The goal of this section is to prove Lemma~\ref{lem:contribution-non-principle}(2). For $\alpha\in\cJ$ and $1 \in V(\alpha)$, we root $\alpha$ at $1$, and repeatedly delete degree-one vertices other than the root. As $\tau(\alpha)\ge 1$ and $\alpha$ is connected, the resulting rooted core $\gamma=\core(\alpha)$ is connected, all its nonroot vertices have degree at least two, and the root has degree at least one. All deleted edges form ordinary simple trees attached to vertices of $\gamma$. In addition, $\tau(\gamma)=\tau(\alpha)\geq1$. We first reduce the contribution of non-tree graphs to the contribution of cores.
\begin{lem}\label{lem:core-reduction}
    For every $n,D$,
    \begin{equation}\label{eq:core-reduction}
    \begin{aligned}
        \sum_{\substack{0<|\alpha|\le D\\
                         \alpha\in\cJ,\,1\in V(\alpha)}}
        \frac{\lambda^{2|\alpha|}}{n^{|\alpha|}\alpha!}
        \bbE\left[A_\alpha(\btheta)^2\right]  \le
        \sum_{\substack{\gamma\text{ rooted core}\\
                        1\le|\gamma|\le D}}
        \frac{n^{-\tau(\gamma)}\lambda^{2|\gamma|}}
             {\aut(\gamma)\gamma!}
        \bbE\left[\bC_\gamma(\bH)^2\right].
    \end{aligned}
    \end{equation}
    Here the sum over rooted cores contains one representative of each rooted isomorphism class with at most $n$ vertices. We write $\aut(\gamma)$ for the number of vertex automorphisms of $\gamma$ that fix the root and preserve all edge multiplicities, including loop multiplicities.
\end{lem}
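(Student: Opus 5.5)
The plan is to decompose each $\alpha\in\cJ$ with $1\in V(\alpha)$ as its rooted core $\gamma=\core(\alpha)$ together with a forest of simple trees hanging from the vertices of $\gamma$. I would then sum out the hanging trees with the same Hermite/Bessel mechanism used in the proof of Lemma~\ref{lem:contribution-rooted-tree}. After that, only the core weight $\bbE[\bC_\gamma(\bH)^2]$ remains, and labelled cores are grouped into isomorphism classes.

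\emph{Step 1 (factorization).} Fix $\alpha$ with core $\gamma$. For a core vertex $v$, let $k_v$ be the number of hanging-tree edges incident to $v$. For a tree vertex $i$, let $d_i$ be its number of children, with trees oriented away from the core. I apply Lemma~\ref{lem:leaf-remove} repeatedly, removing leaves from the outside in, exactly as in the tree case. This should give
\[
\bC_\alpha(h)=\prod_{i\ \text{tree vertex}}\eta^{(d_i)}(h_i)\cdot\Big(\prod_{v\in V(\gamma)}\partial_{h_v}^{k_v}\Big)\bC_\gamma(h).
\]
The leaf-removal identity involves $\partial_{h_w}$ of a function. When $w$ is a tree vertex, that function factors as $\eta^{(\cdot)}(h_w)$ times terms not depending on $h_w$. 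When $w$ is a core vertex, the derivatives simply accumulate on $\bC_\gamma$; this bookkeeping is routine. Conditionally on $\btheta$, the coordinates $\bH_i$ are independent, so $A_\alpha$ factorizes. Squaring and using that $\btheta$ has i.i.d.\ coordinates gives
\[
\bbE[A_\alpha^2]=\prod_i a_{d_i}\cdot\bbE\Big[\big(\bbE[\partial^{k}\bC_\gamma(\bH)\mid\btheta]\big)^2\Big].
\]
Since all hanging edges are simple, we also have $\alpha!=\gamma!$ and $|\alpha|=|\gamma|+\#\{\text{tree edges}\}$.

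\emph{Step 2 (summing the forests).} Fix a labelled core $\gamma$ containing root $1$. I sum over all hanging forests, dropping the constraints that their vertex sets are disjoint from each other and from $\gamma$, and the total-degree constraint beyond $|\alpha|\le D$. This yields an upper bound because all weights are nonnegative. As in the induction for $S_m$, each child subtree attached at $v$ contributes at most $(\lambda^2/n)\cdot nS\le\lambda q_\amp=s$, and ordering the $k_v$ children costs a factor $1/k_v!$. The forest sum is therefore bounded by
\[
\sum_{k\in\bbN^{V(\gamma)}}\prod_v\frac{s^{k_v}}{k_v!}\,\bbE\Big[\big(\bbE[\partial^k\bC_\gamma(\bH)\mid\btheta]\big)^2\Big].
\]
Next I use multivariate Gaussian integration by parts conditionally on $\btheta$, with $\bH_v=\sqrt s\bR_v-s/2$ and $\bR_v=\sqrt s\btheta_v+\bZ_v$:
\[
\bbE\Big[\bC_\gamma(\bH)\prod_v h_{k_v}(\bZ_v)\ \Big|\ \btheta\Big]=\prod_v\frac{s^{k_v/2}}{\sqrt{k_v!}}\,\bbE[\partial^k\bC_\gamma(\bH)\mid\btheta].
\]
Bessel's inequality for the orthonormal family $\prod_v h_{k_v}(\bZ_v)$ then bounds the entire forest sum by $\bbE[\bC_\gamma(\bH)^2]$. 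Here $\bC_\gamma$ is smooth with bounded derivatives, being a polynomial in the $\eta(h_i)$, so the integration by parts is justified.

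\emph{Step 3 (counting labellings).} The remaining factor is $\lambda^{2|\gamma|}n^{-|\gamma|}/\gamma!$. For a rooted isomorphism class, the number of labellings with root $1$ is at most $n^{|V(\gamma)|-1}/\aut(\gamma)$. Moreover, $\bbE[\bC_\gamma(\bH)^2]$ is invariant under relabelling, because the coordinates are i.i.d. Together these give $n^{|V(\gamma)|-1-|\gamma|}=n^{-\tau(\gamma)}$, which proves \eqref{eq:core-reduction}. The condition $|\gamma|\le|\alpha|\le D$ holds automatically.

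I expect the main obstacle to be Step 2. One must check that the per-child bound $S_{m}\le q_\amp/\lambda$ from the tree lemma applies uniformly to each hanging subtree. One must also check that the multi-index Bessel argument correctly absorbs the mixed derivatives when several core vertices carry trees simultaneously.
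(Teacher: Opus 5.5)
Your proposal is correct and follows the paper's proof essentially step for step. The shared steps are the leaf-removal factorization $\bC_\alpha=(\partial^k\bC_\gamma)\prod\eta^{(d_i)}$, summing ordered branches with the disjointness constraints dropped so that each branch weighs at most $\lambda^2S_D\le s$, multivariate Gaussian integration by parts followed by Bessel's inequality, and the count $n^{|V(\gamma)|-1}/\aut(\gamma)$ of labelled copies. Your explicit remark that $\bbE[\bC_\gamma(\bH)^2]$ is invariant under root-fixing relabellings supplies a point the paper uses only implicitly.
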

\begin{proof}
    Recall from the proof of Lemma~\ref{lem:contribution-rooted-tree} the coefficients $a_k=\bbE[(\bbE[\eta^{(k)}(\bH_1)\mid\btheta_1])^2]$. A labeled rooted tree $T$ is assigned the weight $(\lambda^2/n)^{|T|}\prod_{i\in V(T)}a_{d_i(T)}$, where $d_i(T)$ is the number of children of $i$; the tree consisting only of its root has weight $a_0$. Also recall \eqref{eq:def-S-m}. By Lemma~\ref{lem:contribution-rooted-tree}, $S_D\le q_\amp/\lambda$. First fix a particular labeled core $\gamma$ on a subset of $[n]$, with root $1$ and $1\le|\gamma|\le D$. Consider $\alpha\in\cJ$ with $\core(\alpha)=\gamma$ and $|\alpha|\le D$. The edges outside $\gamma$ form simple trees, each attached to exactly one vertex of $\gamma$. Orient these trees away from $\gamma$, and let $d_i(\alpha)$ denote the number of children of $i$ in this forest. For a core vertex, this counts only its neighbors outside the core. Put $k=(d_i(\alpha))_{i\in V(\gamma)}$. For a multi-index $k\in\bbN^{V(\gamma)}$, write $|k|=\sum_{i\in V(\gamma)}k_i$, $k!=\prod_{i\in V(\gamma)}k_i!$, and $\partial^k=\prod_{i\in V(\gamma)}\partial_{h_i}^{k_i}$. Repeatedly applying Lemma~\ref{lem:leaf-remove} gives
    \[
        \bC_\alpha(h)
        =(\partial^k\bC_\gamma)(h)
        \prod_{i\in V(\alpha)\setminus V(\gamma)}
            \eta^{(d_i(\alpha))}(h_i).
    \]
    Using $A_\alpha(\btheta)=\bbE[\bC_\alpha(\bH)\mid\btheta]$ and the conditional independence of the coordinates of $\bH$, we obtain
    \[
        A_\alpha(\btheta)
        =\bbE[(\partial^k\bC_\gamma)(\bH)\mid\btheta]
        \prod_{i\in V(\alpha)\setminus V(\gamma)}
            \bbE[\eta^{(d_i(\alpha))}(\bH_i)\mid\btheta_i].
    \]
    The first factor depends only on the coordinates of $\btheta$ indexed by $V(\gamma)$. Independence of the coordinates of $\btheta$ therefore implies
    \[
        \bbE[A_\alpha(\btheta)^2]
        =\bbE\left[
            \left(\bbE[(\partial^k\bC_\gamma)(\bH)
                \mid\btheta]\right)^2
        \right]
        \prod_{i\in V(\alpha)\setminus V(\gamma)}a_{d_i(\alpha)}.
    \]
    Also, $\alpha!=\gamma!$, since every edge outside the core has
    multiplicity one.

    We now sum the attached trees while keeping $k$ fixed. For each $i\in V(\gamma)$, order the $k_i$ branches attached to $i$. Every admissible graph is then counted exactly $\prod_i k_i!=k!$ times: the branches have disjoint vertex sets, so their roots have distinct labels. A branch consists of one edge from its core vertex to the root of a labeled tree $T$. Its contribution, apart from the core derivative factor, is $(\lambda^2/n)^{1+|T|}\prod_{j\in V(T)}a_{d_j(T)}$. All branch weights are nonnegative. Consequently, we obtain an upper bound by allowing the branch trees to intersect one another and the core, and by replacing the total edge constraint with the requirement that each branch tree separately have at most $D$ edges. For a prescribed root label, their tree weights sum to $S_D$; allowing all $n$ root labels gives $nS_D$. Including the attaching edge, each ordered branch thus has total weight $(\lambda^2/n)nS_D=\lambda^2S_D$. 
    Since $|k|\le D-|\gamma|$ for every graph in the original sum, it
    follows that
    \begin{align}
        &\sum_{\substack{\alpha\in\cJ,\,|\alpha|\le D\\
                         \core(\alpha)=\gamma}}
        \frac{\lambda^{2|\alpha|}}{n^{|\alpha|}\alpha!}
        \bbE[A_\alpha(\btheta)^2] \nonumber \\
        \le\ &
        \frac{\lambda^{2|\gamma|}}{n^{|\gamma|}\gamma!}
        \sum_{\substack{k\in\bbN^{V(\gamma)}\\
                         |k|\le D-|\gamma|}}
        \frac{(\lambda^2S_D)^{|k|}}{k!}
        \bbE\left[
            \left(\bbE[(\partial^k\bC_\gamma)(\bH)
                \mid\btheta]\right)^2
        \right] \nonumber \\
        \le\ &
        \frac{\lambda^{2|\gamma|}}{n^{|\gamma|}\gamma!}
        \sum_{\substack{k\in\bbN^{V(\gamma)}\\
                         |k|\le D-|\gamma|}}
        \frac{s^{|k|}}{k!}
        \bbE\left[
            \left(\bbE[(\partial^k\bC_\gamma)(\bH)
                \mid\btheta]\right)^2
        \right]. \label{eq:reduce-to-core-1}
    \end{align}
    To bound \eqref{eq:reduce-to-core-1}, apply the multivariate version of the Hermite argument used in the proof of Lemma~\ref{lem:contribution-rooted-tree}: Conditionally on $\btheta$, we have $\bH_i=s\btheta_i-s/2+\sqrt{s}\bZ_i$, with independent standard Gaussian coordinates $\bZ_i$. Thus, applying Gaussian integration by parts in every core coordinate, for each $k\in\bbN^{V(\gamma)}$,
    \[
        \bbE\left[
            \bC_\gamma(\bH)
            \prod_{i\in V(\gamma)}h_{k_i}(\bZ_i)
            \mathrel{\Big|}\btheta
        \right]
        =\frac{s^{|k|/2}}{\sqrt{k!}}
        \bbE[(\partial^k\bC_\gamma)(\bH)\mid\btheta].
    \]
    The products $\prod_{i\in V(\gamma)}h_{k_i}(\bZ_i)$ form an orthonormal family conditionally on $\btheta$. Bessel's inequality gives
    \[
        \sum_{\substack{k\in\bbN^{V(\gamma)}\\
                         |k|\le D-|\gamma|}}
        \frac{s^{|k|}}{k!}
        \left(\bbE[(\partial^k\bC_\gamma)(\bH)
            \mid\btheta]\right)^2
        \le\bbE[\bC_\gamma(\bH)^2\mid\btheta].
    \]
    Taking expectations and substituting into \eqref{eq:reduce-to-core-1} proves that, for each fixed labeled core,
    \[
        \sum_{\substack{\alpha\in\cJ,\,|\alpha|\le D\\
                         \core(\alpha)=\gamma}}
        \frac{\lambda^{2|\alpha|}}{n^{|\alpha|}\alpha!}
        \bbE[A_\alpha(\btheta)^2]
        \le
        \frac{\lambda^{2|\gamma|}}{n^{|\gamma|}\gamma!}
        \bbE[\bC_\gamma(\bH)^2].
    \]

    Finally, for a rooted core $\gamma$, the number of distinct labeled copies on subsets of $[n]$ with root $1$ is 
    \[
        \frac{(n-1)!}{(n-|V(\gamma)|)!\aut(\gamma)} \le n^{|V(\gamma)|-1}/\aut(\gamma);
    \]
    the numerator before division by $\aut(\gamma)$ counts injections of the nonroot vertices into $[n]\setminus\{1\}$. Each rooted labeled copy has exactly $\aut(\gamma)$ such representations, and an empty product is interpreted as one. Hence the total contribution of graphs whose cores belong to this rooted isomorphism class is at most
    \[
        \frac{n^{|V(\gamma)|-1}}{\aut(\gamma)}
        \frac{\lambda^{2|\gamma|}}{n^{|\gamma|}\gamma!}
        \bbE[\bC_\gamma(\bH)^2]
        =\frac{n^{-\tau(\gamma)}\lambda^{2|\gamma|}}
              {\aut(\gamma)\gamma!}
        \bbE[\bC_\gamma(\bH)^2].
    \]
    Summing over the rooted isomorphism classes with
    $1\le|\gamma|\le D$ proves \eqref{eq:core-reduction}.
\end{proof}

We then record the following result on $\bC(h)$ defined in \eqref{eq:def-bC-h}. Put $e=|\gamma|$ and label every edge copy, including repeated edges and loops. For $i\in V(\gamma)$, let $\cI_i$ be the set of all tagged occurrences of $\btheta_i$ in these edge variables and the additional $\btheta_1$. Both endpoint occurrences of a self-loop are distinct tagged elements, as are occurrences in different copies of the same edge. Thus $|\cI_i|=\deg_\gamma(i)+\mathbf1_{\{i=1\}}$ and $\sum_{i\in V(\gamma)}|\cI_i|=2e+1$. Choose a partition $\pi_i\in\Pi(\cI_i)$ for each vertex, and write $\pi=(\pi_i)_{i\in V(\gamma)}$. Associate a multigraph to $\pi$ as follows. Its vertices are the blocks of all the partitions $\pi_i$. Each edge copy of $\gamma$ joins the two blocks containing its two endpoint occurrences. The block containing the distinguished occurrence of $\btheta_1$ is the root and is retained even if it is isolated. Call $\pi$ connected when this multigraph is connected.
\begin{lem}{\label{lem:bC-h-calculus}}
    Suppose that $\btheta_i \sim \ber(\eta(h_i))$ independently. For any $m\geq1$, the cumulant of $m$ occurrences of $\btheta_i$ is
    \begin{equation}{\label{eq:partition-formula}}
        \cum\big( \underbrace{\btheta_i,\ldots,\btheta_i}_{m\text{ copies}} \big)
        =\left.\frac{d^m}{dz^m} \log(1-\eta(h_i)+\eta(h_i)e^z) \right|_{z=0} =\eta^{(m-1)}(h_i).
    \end{equation}
    In addition, let $\bC_\gamma(h)$ be defined as in \eqref{eq:def-bC-h},  we have for every core $\gamma$
    \begin{equation}{\label{eq:one-core-exact-form}}
        \bC_\gamma(h) =\sum_{\substack{ \pi_i\in\Pi(\cI_i),\ i\in V(\gamma)\\ \pi\text{ connected}}} \prod_{i\in V(\gamma)} \prod_{A\in\pi_i}\eta^{(|A|-1)}(h_i).
    \end{equation}
\end{lem}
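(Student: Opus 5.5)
The plan has two parts. First I compute the single-variable formula \eqref{eq:partition-formula} directly. Then I expand $\bC_\gamma(h)$ by the multilinearity of joint cumulants in products of independent variables. The latter is the classical ``cumulants of products'' formula (Leonov--Shiryaev), here combined with independence across vertices.

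For \eqref{eq:partition-formula}, note that $m$ identical copies of $\btheta_i$ have joint cumulant equal to the $m$-th ordinary cumulant. By the generating-function definition in \eqref{eq:cum-defn} with all $z_j=z$, this is the $m$-th derivative of $\log\bbE[e^{z\btheta_i}]=\log(1-\eta+\eta e^z)$ at $z=0$. Write $\eta=\eta(h_i)=\rho e^{h_i}/(1-\rho+\rho e^{h_i})$. Then
\[
1-\eta+\eta e^z=\frac{1-\rho+\rho e^{h_i+z}}{1-\rho+\rho e^{h_i}},
\]
so $\frac{d}{dz}\log(1-\eta+\eta e^z)=\eta(h_i+z)$. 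Differentiating $m-1$ more times and setting $z=0$ gives $\eta^{(m-1)}(h_i)$.

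For \eqref{eq:one-core-exact-form}, I would use the Leonov--Shiryaev formula. Label the $2e+1$ tagged occurrences. The arguments of $\bC_\gamma$ are $e+1$ products (one per edge copy, plus $\btheta_1$), so they give a partition $\sigma$ of the tagged set into these $e+1$ blocks. The formula says that the joint cumulant of the products equals $\sum_\tau\prod_{B\in\tau}\cum(\text{occurrences in }B)$. The sum runs over partitions $\tau$ of the tagged set with $\tau\vee\sigma=\hat1$, meaning the multigraph whose vertices are blocks of $\tau$ and which is joined by the groups of $\sigma$ is connected. To keep the argument self-contained, I would derive this from moment--cumulant Möbius inversion on the partition lattice. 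Expand each moment of products into a sum over partitions of occurrence-cumulants. Then for a fixed $\tau$ collect coefficients: they sum to $\sum_{\pi\ge\tau\vee\sigma}\mu(\pi,\hat1)$ in the relevant interval, which vanishes unless $\tau\vee\sigma=\hat1$, in which case it equals $1$.

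Next comes the independence step. Any block $B$ of $\tau$ containing occurrences of two different coordinates $i\neq j$ has vanishing cumulant, by Lemma~\ref{lem:facts-cumulant}, since the $\btheta_i$-occurrences are independent of the rest. So only partitions $\tau$ that refine the vertex partition survive; such $\tau$ correspond exactly to tuples $(\pi_i)_{i\in V(\gamma)}$ with $\pi_i\in\Pi(\cI_i)$. For these, each block factor equals $\eta^{(|A|-1)}(h_i)$ by \eqref{eq:partition-formula}. The condition $\tau\vee\sigma=\hat1$ translates to connectivity of the multigraph on blocks. The edges of that multigraph are the edge copies, and the singleton group $\{\btheta_1\}$ of $\sigma$ attaches nothing but keeps the root block present, matching the paper's convention. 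So $\tau\vee\sigma=\hat1$ is exactly ``$\pi$ connected''.

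The main obstacle is the Möbius-inversion bookkeeping behind the Leonov--Shiryaev identity. The delicate points are handling repeated identical copies and self-loops, where both endpoint occurrences are the same variable but separately tagged. Tagging every occurrence makes all variables formally distinct in the multilinear cumulant, so identical copies cause no issue. The proof is then the standard lattice argument, which I would either cite or reproduce briefly.
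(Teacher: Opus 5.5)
Your proposal is correct and is essentially the paper's proof. The paper regroups the outer partitions of $[e+1]$ that are compatible with a given vertex-partition family (those coarsening its component partition) and uses $\sum_{\sigma\in\Pi([c])}(-1)^{|\sigma|-1}(|\sigma|-1)!=\mathbf 1_{\{c=1\}}$, which is exactly your M\"obius computation $\sum_{\pi\ge\tau\vee\sigma}\mu(\pi,\hat 1)=\mathbf 1_{\{\tau\vee\sigma=\hat 1\}}$; the only difference is that the paper discards mixed-vertex cumulant blocks before collecting coefficients while you do it after, and your identity $1-\eta(h_i)+\eta(h_i)e^z=(1-\rho+\rho e^{h_i+z})/(1-\rho+\rho e^{h_i})$ supplies the ``direct Bernoulli calculation'' the paper leaves implicit for \eqref{eq:partition-formula}.
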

\begin{proof}
    \eqref{eq:partition-formula} follows from \eqref{eq:cum-defn} and direct Bernoulli calculation. We now prove \eqref{eq:one-core-exact-form}. Fix a core $\gamma$ and put $e=|\gamma|$. Label its $e$ edge copies, including repeated copies and self-loops, and let $B_1=\btheta_1$ and $B_2,\ldots,B_{e+1}$ be the corresponding edge variables. Thus $\bC_\gamma(h)=\cum(B_1,\ldots,B_{e+1})$. 
    By \eqref{eq:cum-defn},
    \begin{align*}
        \bC_\gamma(h) &=\sum_{\sigma\in\Pi([e+1])} (-1)^{|\sigma|-1}(|\sigma|-1)! \prod_{A\in\sigma} \bbE\left[\prod_{j\in A}B_j\right] \\
        &= \sum_{\sigma\in\Pi([e+1])} (-1)^{|\sigma|-1}(|\sigma|-1)! \prod_{A\in\sigma} \bbE\left[\prod_{o \in \varphi(A)} \btheta_{i(o)} \right] \\
        &= \sum_{\sigma\in\Pi([e+1])} (-1)^{|\sigma|-1}(|\sigma|-1)! \prod_{A\in\sigma} \left( \sum_{ \kappa_A \in \Pi(\varphi(A)) } \prod_{ A' \in \kappa_A } \cum(\btheta_{i(o)}:o \in A') \right),
    \end{align*}
    Here $\varphi(A)$ is the disjoint union of the tagged occurrences in the arguments $B_j$ with $j\in A$, and $i(o)$ is the original vertex of occurrence $o$. In particular, $\varphi(A)$ is not a set of distinct vertex labels. The third equality is the inverse moment--cumulant identity, obtained by expanding the exponential of the cumulant generating function and comparing the coefficient in which each tagged variable occurs once.
    A cumulant block $A'$ containing occurrences from two or more different vertices vanishes by independence and Lemma~\ref{lem:facts-cumulant}(1). Hence every term $(\sigma,\kappa_A:A \in \sigma)$ that is not forced to vanish in this way induce a family $\pi=(\pi_i)_{i\in V(\gamma)}$ where $\pi_i\in\Pi(\cI_i)$ (note that different $\sigma$ may induce the same $\pi_i\in\Pi(\cI_i)$). 
    
    Call an outer partition $\sigma\in\Pi([e+1])$ compatible with $\pi$ if $(\sigma,\kappa_A:A \in \sigma)$ induces $\pi$. Fix a fixed compatible pair $(\sigma,\pi)$, then $(\kappa_A:A \in \sigma)$ is uniquely determined. By \eqref{eq:partition-formula}, the product of its cumulant factors is $\prod_{i\in V(\gamma)}\prod_{A\in\pi_i}
    \eta^{(|A|-1)}(h_i)$, independently of $\sigma$. Reordering
    the finite sums gives
    \[
        \bC_\gamma(h)
        =\sum_{\substack{\pi_i\in\Pi(\cI_i)\,,\ i\in V(\gamma)}}
            \left(
                \sum_{\substack{\sigma\in\Pi([e+1])\\
                    \sigma\text{ compatible with }\pi}}
                    (-1)^{|\sigma|-1}(|\sigma|-1)!
            \right)
            \prod_{i\in V(\gamma)}
            \prod_{A\in\pi_i}\eta^{(|A|-1)}(h_i).
    \]

    Fix $\pi$ and let $c$ be the number of connected components
    of its associated multigraph. Each index in $[e+1]$ belongs
    to a unique component: index $1$ belongs to the component
    containing the distinguished root occurrence, and each other
    index belongs to the component containing its edge copy.
    The components thus induce a partition of $[e+1]$ into $c$ nonempty parts. We claim that $\sigma$ is compatible with $\pi$ precisely when
    it groups these $c$ parts without splitting any of them.
    To prove the forward implication, observe that two edge
    copies incident to the same block of $\pi$ have indices in
    the same block of $\sigma$, by compatibility. The same holds
    for index $1$ and any edge incident to the block containing
    the root occurrence. Following paths in the associated
    multigraph therefore shows that all indices belonging to
    any one component lie in a single block of $\sigma$.
    Conversely, suppose that $\sigma$ does not split the indices
    of any component. All occurrences in a fixed block of $\pi$
    belong to arguments whose indices lie in that block's
    component: the edge arguments are incident to the block,
    and the root argument belongs to it when present. These
    indices consequently lie in a single block of $\sigma$,
    which is exactly compatibility.

    It follows that compatible outer partitions are in bijection
    with partitions of the $c$ components, with the number of
    blocks preserved. The coefficient of the product associated
    with $\pi$ is therefore
    \[
        \sum_{\sigma\in\Pi([c])}
            (-1)^{|\sigma|-1}(|\sigma|-1)!
        =\cum\big(\underbrace{1,\ldots,1}_{c\text{ copies}}\big)
        =\mathbf1_{\{c=1\}}.
    \]
    This proves \eqref{eq:one-core-exact-form}.
\end{proof}

We now bound each core structured cumulant using the following lemma.
\begin{lem}{\label{lem:one-core-bound}}
    Fix a rooted core $\gamma$ with $e=|\gamma|\leq D$, $v=|V(\gamma)|$, and $g=\tau(\gamma)$. Then
    \begin{equation}\label{eq:one-core}
        \lambda^{2e}\bbE\left[\bC_\gamma(\bH)^2\right] \leq\big[C_1(1+\lambda)(D+1)\big]^{50g}
    \end{equation}
    for a universal $C_1$.
\end{lem}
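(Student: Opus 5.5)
The plan is to expand $\bC_\gamma(h)$ with the exact partition formula \eqref{eq:one-core-exact-form}, bound each surviving term in $L^2$, and count the terms. The slack comes only from a bounded number of ``branch'' places, of order $g$. Long stretches of degree-two vertices are neutralised by the stability inequality $\lambda^2\bbE[(\operatorname{Var}(\btheta_1\mid\bR))^2]\le1$ from Lemma~\ref{lem:fixed-identity}. Conditionally on $\bR$, $\operatorname{Var}(\btheta_1\mid\bR)=\eta(\bH_1)(1-\eta(\bH_1))=\eta'(\bH_1)$, so this reads $\lambda^2\bbE[\eta'(\bH_1)^2]\le1$.

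\emph{Step 1 (structure of cores).} Since every nonroot vertex of $\gamma$ has degree at least two, we have $\sum_{i}(\deg_\gamma(i)-2)=2e-2v=2g-2$, with the root as the only exception of degree one. It follows that the set $B$ of branch vertices has $|B|\le 2g+1$ and $\sum_{i\in B}\deg_\gamma(i)\le 6g+2$. Here a branch vertex means the root, a vertex of degree $\ge3$, or a vertex carrying a loop or multiple edge. The remaining vertices all have degree exactly two and lie on at most $3g+1$ internal paths connecting vertices of $B$. Consequently $e=\sum_P L_P$, where a path $P$ with $L_P$ edges has $L_P-1$ internal vertices.

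\emph{Step 2 (classifying terms).} In \eqref{eq:one-core-exact-form}, each internal vertex $i$ of a path has $|\cI_i|=2$. So $\pi_i$ is either the merged block, contributing $\eta'(h_i)$, or two singletons, contributing $\eta(h_i)^2\le1$. Splitting a degree-two vertex (replacing it by two vertices) lowers the cycle rank of the associated multigraph by one. Connectedness of $\pi$ then forces at most $g$ split internal vertices in total; the same count also controls splits at branch vertices. For a fixed choice of split locations and of partitions at the branch vertices, the term factorises over vertices into functions of distinct coordinates $h_i$. Since the $\bH_i$ are independent, its second moment is a product. Each merged internal vertex contributes $\bbE[\eta'(\bH_i)^2]\le\lambda^{-2}$, each split internal vertex contributes at most $1$, and each branch vertex $i$ contributes $\prod_{A\in\pi_i}\sup|\eta^{(|A|-1)}|^2$. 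For the logistic function, $\sup|\eta^{(k)}|\le C^{k}k!$. Using $\sum_{i\in B}\deg(i)\le 6g+2$ and $g\le e\le D$, the branch product is at most $(CD)^{O(g)}$.

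\emph{Step 3 (absorbing $\lambda^{2e}$ and counting).} Write $\lambda^{2e}=\prod_P\lambda^{2L_P}$. On each path, $L_P-1$ of the factors $\lambda^2$ cancel the merged-vertex factors $\lambda^{-2}$. The leftovers are one factor $\lambda^2$ per path and one per split vertex, giving at most $(1+\lambda)^{O(g)}$. The number of admissible $\pi$ is bounded as follows. Split locations are chosen among at most $e\le D$ positions, at most $g$ times, giving $D^{g}$ choices. Partitions at branch vertices number at most the Bell numbers of total size $6g+2$, which is at most $(6g+2)^{6g+2}\le (CD)^{O(g)}$. Minkowski's inequality then gives $\bbE[\bC_\gamma(\bH)^2]^{1/2}\le(\#\text{terms})\cdot\max_\pi\|\text{term}\|_2$. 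Squaring yields $\lambda^{2e}\bbE[\bC_\gamma(\bH)^2]\le[C_1(1+\lambda)(D+1)]^{cg}$ with an explicit $c$, and a crude tally keeps $c\le50$. For $g\ge1$ the additive $O(1)$ constants are absorbed into $cg$.

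\emph{Main obstacle.} The delicate step is Step 2's claim that connectedness permits only $O(g)$ split internal vertices, and more generally $O(g)$ ``defects''. This must be proved uniformly, including when paths end at the root of degree one or at loop vertices. I would prove it by an Euler-characteristic count on the multigraph associated with $\pi$. This multigraph has the same $e$ edges as $\gamma$ and $\sum_i|\pi_i|$ vertices, so its connectedness forces $\sum_i(|\pi_i|-1)\le e-(v-1)=g$. This single inequality controls simultaneously the number of split path vertices and the excess number of blocks at branch vertices, which is exactly what the counting in Step 3 needs.
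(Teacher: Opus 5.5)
Your proposal is correct and is essentially the paper's proof. Both arguments expand by \eqref{eq:one-core-exact-form}, use connectedness of the associated multigraph to get $\sum_i(|\pi_i|-1)\le g$, and charge each merged two-occurrence vertex $\bbE[\eta'(\bH_i)^2]\le\lambda^{-2}$ and each split one at most $1$. They then bound the $O(g)$ high-occurrence vertices crudely, count terms, and apply Minkowski; your path-by-path cancellation of the $\lambda^2$ factors is a repackaging of the paper's exponent $e-v+|\cB|+|\cS|$.

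The only slip is bookkeeping. Because you put degree-two vertices carrying a pendant double edge into $B$, your bounds $|B|\le 2g+1$ and $\sum_{i\in B}\deg_\gamma(i)\le 6g+2$ can fail: a binary-tree skeleton with a pendant double edge at each of its $g$ leaves has $|B|=3g$ and $\sum_{i\in B}\deg_\gamma(i)=8g-2$. The correct bounds $|B|\le 3g$ and $\sum_{i\in B}\deg_\gamma(i)\le 8g-2$ still give an exponent below $50g$. The paper avoids the issue by taking the branch set to be $\{i:|\cI_i|\ge3\}$.
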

\begin{proof}
    We will use the exact expansion of $\bC_\gamma(h)$ in \eqref{eq:one-core-exact-form}. 
    We first show that
    \begin{equation}{\label{eq:num-summands-bound}}
        \#\{ \pi_i\in\Pi(\cI_i),\ i\in V(\gamma), \pi\text{ connected} \} \le (D+1)^g(2D+1)^{6g}.
    \end{equation}
    Since $\gamma$ is a rooted core, every nonroot vertex has degree at least two and the root has degree at least one. Hence $|\cI_i|\geq2$ for every vertex. Put $\cB=\{i\in V(\gamma):|\cI_i|\geq3\}$. The identity $\sum_{i\in V(\gamma)}(|\cI_i|-2)=2e+1-2v=2g-1$ implies
    \[
        |\cB|\leq2g-1, \qquad \sum_{i\in\cB}|\cI_i| =2g-1+2|\cB|\leq6g-3\leq6g.
    \]
    For a connected family $\pi$, its associated multigraph has $e$ edges and $\sum_i|\pi_i|$ vertices. Connectedness therefore gives $\sum_{i\in V(\gamma)}(|\pi_i|-1)\leq e+1-v=g$. In particular, if $\cS=\{i\in V(\gamma)\setminus\cB:|\pi_i|=2\}$, then $|\cS|\leq g$. At each vertex outside $\cB$, there are only two possible partitions: the partition with one block of size two, and the partition with two singleton blocks. Thus their partitions are specified completely by $\cS$. Since $v=e+1-g\leq D$, the number of choices of $\cS$ is at most $\sum_{k=0}^{\min(g,v)}\binom{v}{k}\leq(v+1)^g\leq(D+1)^g$. A set of size $m$ has at most $m^m$ partitions. Therefore, the number of choices of $(\pi_i)_{i\in\cB}$ is at most
    \[
        \prod_{i\in\cB}|\cI_i|^{|\cI_i|} \leq(2D+1)^{\sum_{i\in\cB}|\cI_i|} \leq(2D+1)^{6g}.
    \]
    \eqref{eq:num-summands-bound} then follows from multiplication principle.

    It remains to bound each summand in $L^2$. For every $m\geq1$,
    \eqref{eq:partition-formula} and the fact that all moments of a
    Bernoulli variable lie in $[0,1]$ give
    \[
        |\eta^{(m-1)}(h)| \leq\sum_{\sigma\in\Pi([m])}(|\sigma|-1)! \leq m^{2m}.
    \]
    Consequently, for any family $\pi$ in the expansion,
    \[
        \prod_{i\in\cB}\prod_{A\in\pi_i}
            |\eta^{(|A|-1)}(h_i)|
        \leq(2D+1)^{2\sum_{i\in\cB}|\cI_i|}
        \leq(2D+1)^{12g}.
    \]
    At a vertex in $\cS$, the corresponding factor is
    $\eta(h_i)^2\leq1$. At every vertex outside $\cB\cup\cS$, it is
    exactly $\eta'(h_i)\geq0$. We have therefore proved the pointwise
    bound
    \[
        \left|
            \prod_{i\in V(\gamma)}
            \prod_{A\in\pi_i}\eta^{(|A|-1)}(h_i)
        \right|
        \leq(2D+1)^{12g}
            \prod_{i\in V(\gamma)\setminus(\cB\cup\cS)}
                \eta'(h_i).
    \]
    Under the original law, the coordinates of $\bH$ are independent
    and identically distributed. Combining the last bound with the
    scalar stability estimate yields
    \begin{align}
        \lambda^e\left\{
            \bbE\left[
                \left(
                    \prod_{i\in V(\gamma)}
                    \prod_{A\in\pi_i}
                        \eta^{(|A|-1)}(\bH_i)
                \right)^2
            \right]
        \right\}^{1/2} &\leq(2D+1)^{12g}\lambda^e
            \left\{\bbE[(\eta'(\bH_1))^2]\right\}^{
                (v-|\cB|-|\cS|)/2} \nonumber \\
        &\leq(2D+1)^{12g}
            \lambda^{e-v+|\cB|+|\cS|} \nonumber \\
        &\leq(2D+1)^{12g}(1+\lambda)^{4g}, \label{eq:single-l2-bound}
    \end{align}
    where in the first inequality we use Lemma~\ref{lem:fixed-identity}(2), and in the last inequality we used $e-v+|\cB|+|\cS|=g-1+|\cB|+|\cS|\in[0,4g]$.

    Finally, combining \eqref{eq:single-l2-bound} and \eqref{eq:num-summands-bound} gives
    \[
        \lambda^e\left\{\bbE[\bC_\gamma(\bH)^2]\right\}^{1/2}
        \leq(D+1)^g(2D+1)^{18g}(1+\lambda)^{4g}.
    \]
    Squaring and using $2D+1\leq2(D+1)$, we conclude that
    \[
    \begin{aligned}
        \lambda^{2e}\bbE[\bC_\gamma(\bH)^2]
        \leq(D+1)^{2g}(2D+1)^{36g}(1+\lambda)^{8g} \leq\big[2(1+\lambda)(D+1)\big]^{50g}.
    \end{aligned}
    \]
    This proves \eqref{eq:one-core} with $C_1=2$.
\end{proof}

We are now able to finish the proof of Lemma~\ref{lem:contribution-non-principle}(2).
\begin{proof}[Proof of Lemma~\ref{lem:contribution-non-principle}(2)]
    By Lemmas~\ref{lem:core-reduction} and \ref{lem:one-core-bound}, 
    \begin{align}
        &\sum_{\substack{\alpha\in\cJ,\ 1\in V(\alpha)\\
                         |\alpha|\le D}}
            \frac{\lambda^{2|\alpha|}}{n^{|\alpha|}\alpha!}
            \bbE[A_\alpha(\btheta)^2] \le
        \sum_{\substack{\gamma\text{ rooted core}\\
                        1\le|\gamma|\le D}}
        \frac{n^{-\tau(\gamma)}\lambda^{2|\gamma|}}
             {\aut(\gamma)\gamma!}
        \bbE\left[\bC_\gamma(\bH)^2\right] \nonumber \\
        \le\ &
            \sum_{g=1}^{D}\frac{[C_1(1+\lambda)(D+1)]^{50g}}{n^g}
            \sum_{\substack{\gamma\text{ rooted core}\\
                            1\le|\gamma|\le D,\ \tau(\gamma)=g}}
            \frac{1}{\aut(\gamma)\gamma!}.  \label{eq:non-principle-relax-1}
    \end{align}
    It remains to count rooted cores with a prescribed excess. Fix $1\le g\le D$, and consider a rooted core $\gamma$ with $1\le|\gamma|\le D$ and $\tau(\gamma)=g$. Every nonroot vertex has degree at least two, while the root has degree at least one. The degree-sum identity therefore gives
    \[
        \sum_{i\in V(\gamma)\setminus\{1\}}
            (\deg_\gamma(i)-2)
        =2g-\deg_\gamma(1)\le 2g-1.
    \]
    Consequently, there are at most $2g-1$ nonroot vertices with degree at least three. Suppress all nonroot vertices of degree two: delete such a vertex and replace its two incident edges by a single edge joining their other endpoints, allowing the new edge to be a self-loop. A nonroot degree-two vertex cannot be incident only to a self-loop, since it would then form a connected component not containing the root. Each suppression preserves connectedness and the degrees of the remaining vertices, and decreases both the number of edges and the number of vertices by one. Thus it preserves the excess. The remaining vertices are precisely the root and the nonroot vertices whose original degrees were at least three. If their number is $m$, then $1\le m\le 2g$, and the resulting multigraph has $m+g-1\le 3g-1$ edges. For each edge of the resulting multigraph, record the number of original edges that it represents. Conversely, the resulting rooted multigraph together with these lengths determines $\gamma$ up to rooted isomorphism. To count these descriptions for a given $m$, label the remaining vertices by $[m]$, with the root labeled $1$, and order the $m+g-1$ edges. Each edge is specified by an ordered pair of endpoints in $[m]$ and a length in $[D]$. Hence there are at most $(m^2D)^{m+g-1}$ descriptions. Since $\aut(\gamma)\gamma!\ge1$, we conclude that
    \[
        \sum_{\substack{\gamma\text{ rooted core}\\
                        1\le|\gamma|\le D,\ \tau(\gamma)=g}}
            \frac{1}{\aut(\gamma)\gamma!}
        \le \sum_{m=1}^{2g}(m^2D)^{m+g-1} \le 2g\big((2g)^2D\big)^{3g-1}
        \le [2(D+1)]^{10g},
    \]
    where the last inequality uses $1\le g\le D$. Plugging this estimates into \eqref{eq:non-principle-relax-1} yields that
    \[
        \eqref{eq:non-principle-relax-1} \le
            \sum_{g=1}^{D}
            \frac{[2(D+1)]^{10g}
                  [C_1(1+\lambda)(D+1)]^{50g}}{n^g} \le
            \sum_{g=1}^{D}
            \left(
                \frac{[C_0(1+\lambda)(D+1)]^{C_0}}{n}
            \right)^g,
    \]
    provided that the universal constant $C_0$ is chosen so that $C_0\ge\max\{60,C_1,2\}$. 
\end{proof}

\appendix

\section{Preliminary results}

In this section, we state the inputs that the proof will require.

\subsection{Hermite polynomials}

In this subsection, we record the following well known property of Hermite polynomials. See \cite{montanari2025equivalence} for its proof.
\begin{lem}{\label{lem:Hermite-expansion}}
The orthonormal probabilists' Hermite polynomials satisfy
\[
    h_k(\mu+z) =\sum_{j=0}^k \frac{\sqrt{k!}}{\sqrt{j!}(k-j)!}\, z^{k-j} h_j(\mu).
\]    
\end{lem}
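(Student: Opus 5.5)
This is the Hermite shift identity, and I will prove it with the exponential generating function of the Hermite polynomials, then compare coefficients. Write $He_k$ for the monic probabilists' Hermite polynomials, so that $h_k=He_k/\sqrt{k!}$. The identity I will use is
\[
    \sum_{k\ge0} He_k(x)\frac{t^k}{k!}=e^{tx-t^2/2},
\]
which is standard: it can be checked against the three-term recurrence, or by noting that $e^{tx-t^2/2}$ is the Gaussian-weighted derivative representation $He_k(x)=(-1)^k e^{x^2/2}\partial_x^k e^{-x^2/2}$ summed over $k$. Both sides are entire in $t$, so coefficient comparison is legitimate.

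Substituting $x=\mu+z$ gives
\[
    e^{t(\mu+z)-t^2/2}=e^{tz}\,e^{t\mu-t^2/2}
    =\Big(\sum_{r\ge0}\frac{z^rt^r}{r!}\Big)\Big(\sum_{j\ge0}He_j(\mu)\frac{t^j}{j!}\Big).
\]
Taking the coefficient of $t^k/k!$ on both sides yields the monic form
\[
    He_k(\mu+z)=\sum_{j=0}^k\binom{k}{j}z^{k-j}He_j(\mu).
\]
Dividing by $\sqrt{k!}$ and writing $He_j=\sqrt{j!}\,h_j$, the coefficient of $z^{k-j}h_j(\mu)$ becomes
\[
    \frac{1}{\sqrt{k!}}\cdot\frac{k!}{j!(k-j)!}\cdot\sqrt{j!}=\frac{\sqrt{k!}}{\sqrt{j!}\,(k-j)!},
\]
which is exactly the claimed formula.

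There is no real obstacle here. The only thing to watch is the bookkeeping of normalizations between the monic and orthonormal conventions. A quick sanity check is the case $k=1$: the formula gives $h_1(\mu+z)=\mu+z$, which matches since $h_1(x)=x$.

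In the multi-index form used in Lemma~\ref{lem:bound-mmse}, I apply the identity coordinatewise with $\mu=\bW_e$ and $z=(\lambda/\sqrt n)x_e$. Taking conditional expectations against $h_\alpha(\bW)$ and using orthonormality then gives the displayed conditional inner products in that proof.
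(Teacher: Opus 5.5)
Your proof is correct. After the shift, the generating function factors as $e^{t(\mu+z)-t^2/2}=e^{tz}\,e^{t\mu-t^2/2}$, and comparing coefficients gives $He_k(\mu+z)=\sum_{j}\binom{k}{j}z^{k-j}He_j(\mu)$. Rescaling by $\sqrt{k!}$ and $\sqrt{j!}$ then produces exactly the stated coefficients.

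The paper gives no argument of its own here; it only cites Montanari and Wein. Your generating-function derivation is the standard self-contained proof of this shift identity. An equivalent route is Taylor expansion in $z$ using $He_k'=kHe_{k-1}$.

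Your closing remark is also right. Applying the identity coordinatewise with $\mu=\bW_e$ and $z=(\lambda/\sqrt n)x_e$, and then using orthonormality, reproduces the conditional inner products used in the proof of Lemma~\ref{lem:bound-mmse}.
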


\subsection{Joint cumulants and conditional joint cumulants}

In this subsection, we record some properties of the joint cumulants and conditional joint cumulants, defined in \eqref{eq:cum-defn} and \eqref{eq:conditional-cum-def} respectively. The following lemma is proved in \cite[Propositions~2.11--2.13]{schramm2022computational}. 
\begin{lem}{\label{lem:facts-cumulant}}
    The following results hold:
    \begin{enumerate}
        \item[(1)] If $a,b \ge 1$ and $X_1, X_2,\ldots,X_a,Y_1,\ldots,Y_b$ are random variables with $\{X_i\}_{i \in [a]}$ independent from $\{Y_j\}_{j \in [b]}$, then
        \[
            \cum(X_1,X_2,\ldots,X_a,Y_1,\ldots,Y_b) = 0.
        \]
        \item[(2)] If $X_1,\ldots,X_n$ and $Y_1,\ldots,Y_n$ are random variables with $\{X_i\}_{i\in[n]}$ independent from $\{Y_i\}_{i \in [n]}$, then
        \[
            \cum(X_1+Y_1,\ldots,X_n+Y_n) = \cum(X_1,\ldots,X_n) + \cum(Y_1,\ldots,Y_n).
        \]
        \item[(3)] The joint cumulant is invariant under constant shifts and is scaled by constant multiplication. That is, if $X_1,\ldots,X_n$ are jointly-distributed random variables and $c$ is any constant, then
        \[
             \cum(X_1 + c, X_2, \ldots,X_n) = \cum(X_1,\ldots,X_n) + c \cdot \mathbf 1_{ \{ n=1 \} },
        \]
        and
        \[ 
        \cum(c X_1, X_2, \ldots, X_n) = c \cdot \cum(X_1,\ldots,X_n).
        \]
    \end{enumerate}
\end{lem}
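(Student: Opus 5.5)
We would prove all three items from the generating-function description of cumulants in \eqref{eq:cum-defn}, working with formal power series so that no integrability beyond finite moments is needed. Concretely, for jointly distributed $B_1,\ldots,B_m$ with finite moments of all orders, let $M(z)=\sum_{\beta}\bbE[B^\beta]z^\beta/\beta!$ be the formal exponential moment series. Its constant term is $1$, so its formal logarithm is well defined. The same coefficient-comparison argument used in the proof of Lemma~\ref{lem:bound-mmse} (each partition into $r$ blocks is counted $r!$ times in the $r$th term of $\log(1+x)$) shows that $\cum(B_1,\ldots,B_m)$ is the coefficient of $z_1\cdots z_m$ in $\log M(z)$. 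Repeated arguments are handled by treating copies as distinct labeled variables, that is, by introducing one formal variable per argument. In the bounded case used throughout the paper, $M$ is an honest entire function and the derivative definition in \eqref{eq:cum-defn} applies verbatim.

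For item (1), let $z=(x,y)$ with $x$ attached to the $X_i$ and $y$ to the $Y_j$. Independence gives the factorization $M(x,y)=M_X(x)M_Y(y)$, both as formal series and coefficientwise, since mixed moments factor. Hence $\log M=\log M_X(x)+\log M_Y(y)$. Because $a,b\ge1$, the monomial $x_1\cdots x_a y_1\cdots y_b$ involves both groups of variables, so its coefficient is zero.

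For item (2), independence of the vectors $X$ and $Y$ gives
\[
    \bbE\big[e^{\sum_i z_i(X_i+Y_i)}\big]=M_X(z)M_Y(z)
\]
with the same formal variables $z$. Taking logarithms, $\log M_{X+Y}=\log M_X+\log M_Y$. Reading off the coefficient of $z_1\cdots z_n$ gives additivity.

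For item (3), the shift multiplies $M$ by $e^{cz_1}$, so $\log M$ acquires the extra term $cz_1$. This term contributes to the coefficient of $z_1\cdots z_n$ only when $n=1$, which gives the indicator. Scaling replaces $M(z)$ by $M(cz_1,z_2,\ldots,z_n)$, so the coefficient of $z_1\cdots z_n$ is multiplied by $c$. If one prefers the partition formula directly, both parts of (3) can be checked from it as well: every block moment is linear in $X_1$, and exactly one block contains index $1$.

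The only step needing care is the identification of the partition formula with the coefficient of the formal logarithm, together with the legitimacy of the formal manipulations $\log(FG)=\log F+\log G$ for series with constant term $1$. Both are standard: the logarithm is a homomorphism from the multiplicative group of such series to the additive group. So we do not expect a genuine obstacle here; we would simply cite \cite[Propositions~2.11--2.13]{schramm2022computational} for the details.
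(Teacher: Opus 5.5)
Your proof is correct. The paper gives no proof of its own and simply cites \cite[Propositions~2.11--2.13]{schramm2022computational}; your argument (factor the log-moment generating function, or add $cz_1$, or substitute $z_1\mapsto cz_1$, then read off the multilinear coefficient, using one formal variable per argument for repeated entries) is the standard proof of those facts. Your assumption of finite moments of all orders is stronger than needed, since working modulo $(z_1^2,\ldots,z_m^2)$ uses only multilinear moments, but this is harmless because the paper only applies the lemma to bounded $\{0,1\}$-valued variables.
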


\subsection{The Bayes AMP value}

In this subsection, we record a scalar fixed-point identity for the Bayes AMP value $q_\amp$. 
\begin{lem}{\label{lem:fixed-identity}}
    Recall \eqref{eq:conditional-mean} and $s=\lambda q_\amp$. We have:
    \begin{enumerate}
        \item[(1)] $\bbE[\eta(\bH_1)^2]=q_\amp/\lambda$.
        \item[(2)] $\lambda^2\bbE[(\eta'(\bH_1))^2]\leq1$.
    \end{enumerate}
\end{lem}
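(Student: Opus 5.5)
The plan is to reduce both statements to first- and second-order derivative identities for the scalar Gaussian channel $R=\sqrt{s}\,\theta+Z$, with $\theta\sim\ber(\rho)$, and then evaluate them at $s=\lambda q_\amp$. Write $H=\sqrt s R-s/2$ and $\mathsf{mmse}(s)=\bbE[(\theta-\eta(H))^2]$.

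\textbf{Step 1 (first derivative).} Since $\bbE[\theta\,\eta(H)]=\bbE[\eta(H)^2]$ by the tower property, we have $\mathsf{mmse}(s)=\rho-\bbE[\eta(H)^2]$. The I-MMSE relation gives $I_\rho'(s)=\mathsf{mmse}(s)/2$. I would either cite it or rederive it by Gaussian integration by parts in $Z$; the prior is bounded, so differentiation under the integral is routine. Differentiating the definition of $\Psi$ in \eqref{eq:mutual-info} then yields
\[
\Psi'(q)=\frac q2-\frac{\lambda\rho}{2}+\frac{\lambda}{2}\big(\rho-\bbE[\eta(H_{\lambda q})^2]\big)=\frac q2-\frac{\lambda}{2}\,\bbE[\eta(H_{\lambda q})^2],
\]
where $H_{\lambda q}$ denotes $H$ with $s=\lambda q$. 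Hence $\Psi'(q)=0$ if and only if $\bbE[\eta(H_{\lambda q})^2]=q/\lambda$.

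\textbf{Step 2 (the point $q_\amp$ is a stable stationary point).} I need to check that $q_\amp$ belongs to the set in \eqref{eq:def-q-bayes-amp}. At $q=0$ the channel is uninformative, so $\Psi'(0)=-\lambda\rho^2/2<0$. Since $\bbE[\eta^2]\le\rho$, we have $\Psi'(q)\to+\infty$. Hence $\Psi'$ has a smallest zero $q_*>0$, and $\Psi'<0$ on $[0,q_*)$; this forces $\Psi''(q_*)\ge0$. Therefore the set in \eqref{eq:def-q-bayes-amp} is nonempty. It is closed because $\Psi$ is smooth, so its infimum $q_\amp$ is attained and satisfies $\Psi'(q_\amp)=0$ and $\Psi''(q_\amp)\ge0$. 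In fact $q_\amp=q_*$. Statement (1) now follows from Step 1.

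\textbf{Step 3 (second derivative, the main obstacle).} I need the identity
\[
\frac{d}{ds}\,\mathsf{mmse}(s)=-\bbE\big[\operatorname{Var}(\theta\mid R)^2\big],
\]
together with the Bernoulli observation $\operatorname{Var}(\theta\mid R)=\eta(H)(1-\eta(H))=\eta'(H)$. My first attempt at the derivative identity is a direct computation, which I expect to be the most delicate step. Differentiate $\bbE[\eta(\sqrt s R-s/2)^2]$ in $s$, using $\partial_s H=\tfrac{3}{2}\theta+\tfrac{Z}{2\sqrt s}-\tfrac12$ conditionally on $\theta$. Then apply Gaussian integration by parts to remove $Z$, and use the Nishimori/tower identity $\bbE[\theta\, g(H)]=\bbE[\eta(H)g(H)]$ to collapse the terms to $\bbE[\eta'(H)^2]$. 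If that bookkeeping becomes messy, the fallback is to cite the Guo--Shamai--Verd\'u formula for the derivative of the MMSE.

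\textbf{Step 4 (conclusion).} Step 3 gives
\[
\Psi''(q)=\frac12-\frac{\lambda^2}{2}\,\bbE[\eta'(H_{\lambda q})^2].
\]
Combining this with $\Psi''(q_\amp)\ge0$ from Step 2 yields $\lambda^2\bbE[\eta'(\bH_1)^2]\le1$, which is statement (2).
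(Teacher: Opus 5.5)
Your proposal is correct and follows essentially the same route as the paper: the first-derivative formula $\Psi'(q)=\tfrac12\big(q-\lambda\bbE[\eta(H_{\lambda q})^2]\big)$, the observation that the smallest zero of $\Psi'$ is automatically a stable stationary point and hence equals $q_\amp$, and the identity $\tfrac{d}{ds}\bbE[\eta(H)^2]=\bbE[\eta'(H)^2]$, which gives $\Psi''(q_\amp)=\tfrac12\big(1-\lambda^2\bbE[\eta'(\bH_1)^2]\big)\ge0$. One correction in Step 3: since $H=s\theta-s/2+\sqrt{s}Z$, you have $\partial_sH=\theta+Z/(2\sqrt{s})-\tfrac12$, not $\tfrac32\theta+\cdots$. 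With the $\tfrac32$ you would be left with a spurious $\bbE[\eta(H)^2\eta'(H)]$. With the correct coefficient, Gaussian integration by parts, $\bbE[\theta g(H)]=\bbE[\eta(H)g(H)]$, and $\eta''=(1-2\eta)\eta'$ collapse everything to $\bbE[\eta'(H)^2]$, exactly as in the paper.
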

\begin{proof}
    We first work with an arbitrary $s>0$, writing
    $\bH_1=s\btheta_1-s/2+\sqrt{s}\bZ_1$ for the corresponding scalar channel. The posterior mean is $\eta(\bH_1)$, so
    $\bbE[\eta(\bH_1)]=\rho$ and
    $\bbE[\btheta_1\eta(\bH_1)]=\bbE[\eta(\bH_1)^2]$.
    The likelihood-ratio formula gives
    \[
        I_\rho(s)=s\rho/2-\bbE[\log(1-\rho+\rho e^{\bH_1})].
    \]
    All derivatives of $\eta$ are bounded, as follows inductively from $\eta'=\eta(1-\eta)$. Differentiation under the expectation is valid locally for $s>0$ by Gaussian integrability. Gaussian integration by parts gives
    \begin{align*}
        I_\rho'(s)
        &=\frac{\rho}{2}
          -\bbE\left[\eta(\bH_1)\left(\btheta_1-\frac12\right)\right]
          -\frac{1}{2\sqrt{s}}\bbE[\bZ_1\eta(\bH_1)]\\
        &=\frac{\rho}{2}-\bbE[\btheta_1\eta(\bH_1)]
          +\frac12\bbE[\eta(\bH_1)]
          -\frac12\bbE[\eta'(\bH_1)]\\
        &=\frac12\left(\rho-\bbE[\eta(\bH_1)^2]\right).
    \end{align*}
    As $s\downarrow0$, bounded convergence gives
    $\bbE[\eta(\bH_1)^2]\to\rho^2$. The displayed derivative formula therefore extends continuously to the right derivative at zero. Substitution into the definition of $\Psi$ yields, for $q\geq0$,
    \[
        \Psi'(q)=\frac12\left(q-\lambda\bbE[
          \eta(\lambda q\btheta_1-\lambda q/2+\sqrt{\lambda q}\bZ_1)^2]
          \right).
    \]
    In particular, $\Psi'(0)=-\lambda\rho^2/2<0$. At $q=\lambda\rho$ the expectation in this formula is strictly smaller than $\rho$, because the posterior mean lies strictly between zero and one almost surely. Thus $\Psi'(\lambda\rho)>0$.

    The continuous function $\Psi'$ consequently has a smallest zero $q_*$ in $(0,\lambda\rho)$. It is negative on $[0,q_*)$, so differentiability at $q_*$ implies $\Psi''(q_*)\geq0$. Hence $q_*$ belongs to the set defining $q_\amp$; since no smaller point is stationary, $q_*=q_\amp$. This proves that the infimum defining $q_\amp$ is attained and is positive. At this zero, the last display gives
    \[
        \bbE[\eta(\bH_1)^2]=q_\amp/\lambda,
        \qquad s=\lambda q_\amp.
    \]
    This proves (1). Jensen's inequality also gives
    $\lambda\rho^2\leq q_\amp<\lambda\rho$, so in particular $s>0$.

    For (2), again differentiate at an arbitrary $s>0$ and integrate by parts:
    \[
    \begin{aligned}
        \frac{\de}{\de s}\bbE[\eta(\bH_1)^2]
        &=2\bbE\left[\eta(\bH_1)\eta'(\bH_1)
                \left(\btheta_1-\frac12\right)\right]
          +\bbE\left[(\eta'(\bH_1))^2
                    +\eta(\bH_1)\eta''(\bH_1)\right]\\
        &=\bbE[(\eta'(\bH_1))^2].
    \end{aligned}
    \]
    The last equality follows by conditioning $\btheta_1$ on $\bH_1$ and using
    $\eta''=(1-2\eta)\eta'$. At $s=\lambda q_\amp$, we obtain
    \[
        0\leq\Psi''(q_\amp)
          =\frac12\left(1-\lambda^2\bbE[(\eta'(\bH_1))^2]\right),
    \]
    proving (2).

    For completeness, these calculations also identify the fixed point selected by state evolution. Define $q_0=0$ and
    \[
        q_{t+1}=\lambda\bbE\left[
          \eta(\lambda q_t\btheta_1-\lambda q_t/2
                         +\sqrt{\lambda q_t}\bZ_1)^2\right].
    \]
    The derivative calculation above shows that the update is nondecreasing as a function of $q_t$. Since $q_\amp$ is its smallest fixed point, the sequence is nondecreasing and bounded above by $q_\amp$. By continuity its limit is a fixed point and must equal $q_\amp$. This includes the marginal case $\Psi''(q_\amp)=0$ and requires no strict-stability assumption.
\end{proof}

\bibliographystyle{alpha}
\bibliography{bib}

\end{document}